\documentclass{elsarticle}
\biboptions{sort&compress}
\usepackage{microtype}
\usepackage{amsmath,amsfonts,amssymb}
\usepackage{amsthm}
\usepackage{xcolor}
\usepackage{graphicx}

\usepackage{tikz}
\usepackage{pgfplots}
\pgfplotsset{compat=newest}
\usetikzlibrary{plotmarks}
\usetikzlibrary{arrows.meta}
\usepgfplotslibrary{patchplots}
\usepackage{grffile}

\usepackage{hyperref}

\newtheorem{theorem}{Theorem}[section]

\newtheorem{lemma}[theorem]{Lemma}
\newtheorem{ass}[theorem]{Assumption}
\newtheorem{remark}{Remark}

\newcommand{\bb}{\boldsymbol}
\newcommand{\rme}{\mathrm{e}}
\newcommand{\RR}{\mathbb{R}}

\begin{document}
\myfooter[L]{}

\begin{frontmatter}
\title{The turnpike control in stochastic multi-agent dynamics: a discrete-time approach with exponential integrators}
\author[1,2]{Fabio Cassini}
\ead{fabio.cassini@univr.it, cassini@altamatematica.it}
\author[3]{Chiara Segala\corref{cor1}}
\ead{chiara.segala@usi.ch}
\cortext[cor1]{Corresponding author}
\affiliation[1]{organization={Department of Computer Science,
    University of Verona},addressline={Strada Le Grazie, 15},
  postcode={37134},
  city={Verona},
country={Italy}}
\affiliation[2]{organization={Istituto Nazionale di Alta Matematica},
  addressline={Piazzale Aldo Moro, 5},
  postcode={00185},
  city={Roma},
  country={Italy}}
\affiliation[3]{organization={Faculty of Informatics,
    Universita della Svizzera Italiana -- USI },
    addressline={Via La Santa, 1},
    postcode={6962},
    city={Lugano},
country={Switzerland}
}

\begin{abstract}
In this manuscript, we study the turnpike property in stochastic discrete-time optimal control problems for interacting agents. Extending previous deterministic results, we show that the turnpike effect persists in the presence of noise under suitable dissipativity and controllability conditions. To handle the possible stiffness in the system dynamics, we employ for the time discretization, integrators of exponential type. Numerical experiments validate our findings, demonstrating the advantages of exponential integrators over standard explicit schemes and confirming the effectiveness of the turnpike control even in the stochastic setting. 
\end{abstract}

\begin{keyword}
    turnpike property\sep
    time-discrete systems\sep
    stochastic multi-agent systems\sep
    optimal control\sep
    exponential integrators
\end{keyword}


\end{frontmatter}
\section{Introduction}\label{sec:intro}
The collective behavior of interacting agents has been the subject of extensive mathematical research, see, e.g., \cite{MR3642940, MR3969953}. These models are typically governed by systems of differential equations that describe various interaction mechanisms, such as attraction and repulsion, as well as control inputs that regulate system behavior. This makes them applicable to a wide range of fields, including biology, engineering, economics, and sociology \cite{application-[14],application-[29]}. Significant attention has been devoted to modeling and analysis, and also the study of control impact for such systems has gained interest more recently \cite{sparse-Cucker}. Controlling large-scale, high-dimensional agent-based systems presents several theoretical and computational challenges, which have been addressed using various techniques such as Riccati-based methods \cite{MR3431287,MR4469721}, moment-driven control \cite{MR4399019}, and model predictive control (MPC) \cite{MR3894072}, among others.

A promising approach to efficiently control these complex systems is the so-called turnpike control, which leverages the turnpike property to reduce computational costs while maintaining control effectiveness. In few words, the turnpike phenomenon states that, for sufficiently long time horizons, the optimal control and state remain close to the solution of a corresponding static optimization problem over the majority of the time interval. This property has been widely used to improve computational efficiency in optimal control problems \cite{MR3973342} and has played a key role in establishing convergence results for MPC methods \cite{MR4402854}.  

In deterministic settings, the turnpike property has been established for both continuous-time \cite{main} and discrete-time \cite{turnpike-2, gruene2018turnpike,GugatHertyLiuSegala2024} systems. However, real-world applications frequently involve stochastic perturbations, which can significantly impact the system's long-term behavior and control strategies. In this work, we extend the turnpike analysis to stochastic dynamics \cite{ou2021simulation,10759735}, addressing the challenges posed by noise in multi-agent systems \cite{ma2017consensus,albi2024robust}. Additionally, unlike in~\cite{GugatHertyLiuSegala2024} where the authors employed the explicit Euler method for the time marching, we use integrators of exponential type~\cite{HO10}. This choice, in particular, allows to effectively handle the inherent stiffness that may arise in the interaction dynamics. Exponential integrators provide improved stability properties compared to standard explicit schemes, making them particularly well-suited for stiff systems, and to the authors' knowledge they have not been employed in this context yet.  

The main contributions of this paper are as follows. First, we establish the turnpike property for stochastic discrete-time optimal control problems governing interacting agents, extending previous deterministic results \cite{main, GugatHertyLiuSegala2024}. Our analysis demonstrates that the turnpike effect persists even in the presence of noise, provided certain dissipativity and controllability conditions are fulfilled.
Second, we introduce the use of exponential Rosenbrock integrators for the numerical discretization of the obtained stochastic control problem. To this end, we in particular reformulate the problem in matrix form, which facilitates the application of these integrators. 
Finally, we conduct a series of numerical experiments to validate our findings. We compare standard explicit and exponential integrator-based discretizations in both controlled and uncontrolled settings, demonstrating that schemes of exponential type exhibit superior stability properties, especially in the presence of stiffness. Moreover, we confirm that the turnpike control strategy remains effective in steering the agents toward the desired asymptotic behavior, even in the stochastic setting.

The remainder of this paper is structured as follows. In Section~\ref{sec:stochcontrol}, we introduce the stochastic control problem under consideration, including the system dynamics and cost functional. In Section~\ref{sec:timedisc}, we present the time discretization using exponential integrators, highlighting their advantages over explicit schemes. Section~\ref{sec:turnpike} establishes the turnpike property for the discrete-time stochastic system, leveraging strict dissipativity and cheap control conditions. In Section~\ref{sec:numexp}, we provide numerical experiments to validate our findings. Finally, Section~\ref{sec:conc} concludes the paper and presents possible future interesting developments.  

\section{Stochastic control problem}\label{sec:stochcontrol}
As already mentioned in the introduction, the turnpike phenomenon is a significant principle in optimal control theory. In fact, it establishes that during a specific interior subinterval of a given time horizon, the dynamic control strategy can be approximated with arbitrary precision by the solution of a (generally less complex) static optimization problem. This insight greatly reduces the computational effort typically associated with dynamic control strategies.

In our framework, we start by considering the continuous trajectories of the interacting agents. We denote by $x_k(t) \in \mathbb{R}^d$ the position of agent $k$ at time $t$, with $k = 1, \dots, N$, and the time interval is defined as $[t_0, T]$. The dynamics of the agents are described by the following stochastic differential equation (SDE)
\begin{equation}\label{eq:dynamics}
  dx_k(t) = \left(\frac{1}{N}\sum_{\ell=1}^{N}p(x_k(t),x_\ell(t))(x_\ell(t)-x_k(t)) + u_k(t)\right)dt + \sigma  dW_k(t),
\end{equation}
where the term $\sigma  dW_k(t)$ represents the stochastic noise component, with $W_k(t)$ being standard $d$-dimensional independent Brownian motions. The initial condition for each agent $k$ is given by $x_k(t_0) = x_k^0 \in \mathbb{R}^d$, where $x_k^0$ is a stochastic variable, accounting for randomness in the initial state distribution. The interaction between agents is mediated by a nonlinear interaction kernel $p(x_k(t),x_\ell(t))$ which describes the forces governing their behavior. For further examples and theoretical background, we refer to \cite{attraction}.
The system operates within a well-defined probabilistic framework. Let $(\Omega, \mathcal{F}, \mathbb{P})$ be a complete probability space, where $\Omega$ represents the sample space, $\mathcal{F}$ is the $\sigma$-algebra of measurable events, and $\mathbb{P}$ is the probability measure. The stochastic processes $W_k(t)$, are adapted to the filtration $\mathbb{F} = (\mathcal{F}_t)_{t \geq t_0}$, which represents the natural filtration generated by the Brownian motions. This filtration captures all the information available up to time $t$, ensuring that the evolution of the stochastic processes depends on the past without anticipating future information.

The dynamics of the agents are thus driven by both deterministic controls $u_k(t)$ and the stochastic influences represented by the Brownian motions $W_k(t)$. The control $u_k(t)$ is applied individually to each agent, with the aim of minimizing a cost functional over a finite time horizon.
The control objective is to drive the system towards a desired state $\bar{x}\in \mathbb{R}^d$ while minimizing the associated cost. The latter (to be minimized with respect to the control term) incorporates a tracking term, which measures the deviation of each agent from the desired state $\bar{x}$, and a regularization term to penalize the magnitude of the control. Specifically, the cost functional, which depends on $\bb x= (x_1, \dots, x_N), \ \bb u= (u_1, \dots, u_N)$, is given by
\begin{align}
	\mathcal C(\bb x, \bb u) &= \int_{\Omega} \int_{t_0}^{T} \frac{1}{N} \sum_{k=1}^{N}  \left( \| x_k(t, \omega) - \bar{x} \|^2 + \gamma \| u_k(t) \|^2 \right) dt \, d\mathbb{P}(\omega) \nonumber\\
	&= \mathbb{E} \left[ \frac{1}{N} \int_{t_0}^{T} \sum_{k=1}^{N} \left( \| x_k(t)- \bar{x} \|^2 + \gamma \| u_k (t)\|^2 \right) dt \right].
	\label{eq:cost_functional}
\end{align}
Here, we denote $\| \cdot \|$ as the Euclidean norm on $\mathbb{R}^d$. The parameter $\gamma$ is the regularization parameter penalizing the control effort. Each element $\omega \in \Omega$ corresponds to a specific realization from the sample space, thus describing the inherent randomness of the dynamics.

\subsection{Static control problem}
We now introduce the \emph{static control problem}. In this setting, the static state of agent $k$ is denoted by $\tilde x_k$, and the static control applied to agent $k$ is denoted by $\tilde u_k$. The objective is to minimize the \textit{static} cost functional
\begin{equation}\label{eq:static_cost}
\mathbb{E} \left[ \frac{1}{N} \sum_{k=1}^{N} \left( \, \|\tilde x_k - \bar{x}\|^2 \right)\right]
+ \frac{\gamma}{N} \sum_{k=1}^{N} \|\tilde u_k \|^2,
\end{equation}
with respect to $ \tilde{\bb u}  = (\tilde u_1 , \dots, \tilde u_N )$.
The minimization is subject to the equilibrium condition for each agent $k$, representing the static version of the system's dynamics \eqref{eq:dynamics}. Notice that an equilibrium point for an SDE is a (stochastic) state where the expectation of the dynamics of the system does not change over time. In particular, this means that at the equilibrium the deterministic part of the dynamics (represented by the drift term in~\eqref{eq:dynamics}) is null. Clearly, since the underlying equations are SDEs, even at equilibrium the system exhibits randomness. Therefore, we remark that the equilibrium in our context is a statistical state rather than a deterministic one.
For our specific case, we obtain
\begin{equation}\label{eq:equilibrium_condition}
\frac{1}{N} \sum_{\ell=1}^{N} \mathbb{E} \left( \, p(\tilde x_k , \tilde x_\ell )(\tilde x_\ell  - \tilde x_k ) \, \right) + \tilde u_k   = 0.
\end{equation}
Due to the specific structure of the agent-based dynamics, a solution $\left( \tilde x_k^{*}, \tilde u_k^{*} \right)_{k=1}^{N}$ to the static control problem in \eqref{eq:static_cost}--\eqref{eq:equilibrium_condition} exists but is \textit{not unique}, due to the dependence on the mean and the stochastic nature of the system. Here, we consider the natural candidate solution
\begin{align}\label{eq:stationary_solution}
\tilde x_k^{*} = \bar{x} \quad \text{and} \quad \tilde u_k^{*} = 0.
\end{align}
This implies that, at equilibrium, the static state for each agent coincides with the desired state $\bar{x}$, and no control input is required. Please observe that this specific choice of the static pair in \eqref{eq:stationary_solution} is deterministic.

\section{Time discretization using exponential integrators}\label{sec:timedisc}
As already mentioned in the introduction, we are interested in computing an approximate
solution to the optimal control problem.
The presence of a possibly stiff interaction kernel in the dynamics requires
careful treatment from a time integration point of view. Indeed,
standard explicit integrators (such as the well-known Euler--Maruyama method,
or Runge--Kutta methods in the deterministic case) may be 
subject to severe time step size restrictions
due to the lack of favourable stability properties. To overcome this issue, the research community developed many new techniques and ad-hoc numerical methods.
Here, we focus on the employment of the so-called exponential
integrators~\cite{HO10}, which are \textit{explicit} time marching schemes 
that have proven to perform very well in the stiff regime on many classes of problems 
(see, for instance,
the works~\cite{CEMO21,CC24bis,LPR19,CCEO24,CC24ter,ACCC24} and~\cite{LT13,MT18,LT19} specifically
for the stochastic setting). In particular, for our purposes we will consider a numerical method in the class of the so-called
exponential Rosenbrock schemes~\cite{HOS09,MT18}. The latter are well-suited for
systems of differential equations in which there is no a priori separation of a constant linear stiff part and a non-stiff nonlinear term. This partitioning is required, for instance, to employ the so-called
exponential Runge--Kutta methods~\cite{HO05bis,LT19}.

For convenience of the reader, we report here the main idea at the basis of the
derivation of exponential Rosenbrock integrators in a deterministic autonomous setting (a thorough explanation with full details can be found, e.g., in~\cite{HOS09}). The extension to the stochastic framework is given below.
Let us consider the following general system of ODEs
\begin{equation}\label{eq:ODE}
  \left\{
  \begin{aligned}
  \bb x'(t) &= \bb F(\bb x(t)), \quad t\in(t_0,T], \\
  \bb x(t_0) &= \bb x^0,
  \end{aligned}
  \right.
\end{equation}
and the time discretization $t_{n+1}=t_n+\tau$ for $n=0,\ldots,m-1$. Here $\tau$ denotes the constant time step size and $t_m=T$. Then, we continuously
linearize $\bb F(\bb x(t))$ along the numerical solution $\bb x^n\approx\bb x(t_n)$
as
\begin{equation*}
  \bb F(\bb x(t)) = J_n \bb x(t) + \bb g_n(\bb x(t)),
\end{equation*}
where
\begin{equation}\label{eq:jac}
  J_n = \frac{\partial \bb F}{\partial \bb x}(\bb x^n)
  \quad \text{and} \quad \bb g_n(\bb x(t)) = \bb F(\bb x(t)) - J_n \bb x(t)
\end{equation}
are the Jacobian matrix of the flow evaluated at $\bb x^n$ and the remainder, respectively.
Now, we express the \textit{exact} solution of system~\eqref{eq:ODE} at time $t_{n+1}$ by means of
the variation-of-constants formula
\begin{equation*}
  \bb x (t_{n+1}) = \rme^{\tau J_n}\bb x(t_n) +
  \int_0^{\tau}\rme^{(\tau-s)J_n}\bb g_n (\bb x(t_n + s))ds.
\end{equation*}
A numerical method is then obtained by suitably approximating the integral
term in the formula above. In particular, considering the approximation $g_n (\bb x(t_n + s)) \approx g_n (\bb x(t_n))$,
we get the time marching scheme
\begin{equation}\label{eq:expRB}
  \begin{aligned}
  \bb x^{n+1} &= \rme^{\tau J_n}\bb x^n + \tau \varphi_1(\tau J_n)\bb g_n (\bb x^n)\\
  &= \bb x^n + \tau \varphi_1(\tau J_n)\bb F(\bb x^n).
  \end{aligned}
\end{equation}
This is known as \textit{exponential Rosenbrock--Euler} method.
In formula~\eqref{eq:expRB}, we introduced
the matrix function
$\varphi_1(X) = \int_0^1\rme^{(1-\theta)X}d\theta$ which satisfies the relation
$X\varphi_1(X) = \rme^X-I$. The efficient computation of the matrix exponential and of the exponential-like $\varphi_1$ function
(or their action to vectors) is crucial for an effective employment in practice of the integrator.
For this specific task we refer to the works~\cite{SW09,GRT18,LPR19,AMH11,CCZ23}, among the others.
The exponential Rosenbrock--Euler method is a second-order convergent numerical scheme, it is A-stable by construction (since it integrates exactly linear problems) and it is effective for stiff systems.

A stochastic variant of scheme~\eqref{eq:expRB} can be obtained by using similar reasoning,
see in particular~\cite{MT18}. In fact, a numerical solution to the system of SDEs
\begin{equation*}
  \left\{
  \begin{aligned}
    d\bb x(t) &= \bb F(\bb x(t)) dt + \bb H(\bb x(t))d \bb W(t), \quad t\in(t_0,T], \\
    \bb x(t_0) &= \bb x^0,
  \end{aligned}
  \right.
\end{equation*}
where $d \bb W(t)$ is the term accounting for the Brownian motions,
can be obtained by the so-called \textit{stochastic exponential Rosenbrock--Euler}
method
\begin{equation}\label{eq:serb}
  \bb x^{n+1} = \bb x^n + \tau\varphi_1(\tau J_n)(\bb F(\bb x^n) + \bb H(\bb x^n)(J_n d \bb W^n)) + \bb H(\bb x^n) d \bb W^n.
\end{equation}
Here, $d\bb W^n = \bb W^{n+1} - \bb W^n$ and $J_n$ is defined in~\eqref{eq:jac}.
The stochastic Rosenbrock--Euler method is a first order scheme (in strong sense)
which is suitable for stiff mmetrstems of SDEs. Notice that if we remove the Brownian
motion part from the system, the integrators reduces to the standard 
exponential Rosenbrock--Euler method~\eqref{eq:expRB}.
\begin{remark}
Scheme~\eqref{eq:serb} generalizes straightforwardly to non-autonomous problems in the form
\begin{equation*}
  \left\{
  \begin{aligned}
    d\bb x(t) &= \bb F(t,\bb x(t)) dt + \bb H(\bb x(t))d \bb W(t), \quad t\in(t_0,T], \\
    \bb x(t_0) &= \bb x^0.
  \end{aligned}
  \right.
\end{equation*}
In fact, by equivalently rewriting the system in autonomous formulation (i.e., introducing an additional variable for the time), applying scheme~\eqref{eq:serb}, and employing relations of the $\varphi_1$ function we get the time marching method
\begin{equation}\label{eq:serbaut}
  \bb x^{n+1} = \bb x^n + \tau\varphi_1(\tau J_n)(\bb F(\bb x^n) + \bb H(\bb x^n)(J_n d \bb W^n)) + \bb H(\bb x^n) d \bb W^n
  +\tau^2\varphi_2(\tau J_n)\bb v_n.
\end{equation}
Here, we define
\begin{equation*}
J_n = \frac{\partial \bb F}{\partial \bb x}(t_n,\bb x^n), \quad
\bb v_n=\frac{\partial \bb F}{\partial t}(t_n,\bb x^n),
\end{equation*}
and
$\varphi_2(X)=\int_0^1\rme^{(1-\theta)X}\theta d\theta$ still belongs to the
class of the so-called $\varphi$-functions.
\end{remark}

Recall that, in our case, we are interested in time integrating
\begin{equation*}
  dx_k(t) = \left(\frac{1}{N}\left(\sum_{\ell=1}^{N}p(x_k(t),x_\ell(t))(x_\ell(t)-x_k(t))\right) + u_k(t)\right)dt + \sigma  dW_k(t),
\end{equation*}
for each agent $k=1,\ldots,N$. In the following, we assume for simplicity of notation $d=1$, i.e., $x_k(t)\in \RR$. This assumption will also be used in the numerical examples later and will remain in effect throughout the entire paper. Then, we can write an equivalent matrix formulation of the problem as
\begin{equation}\label{eq:dyn_matrix}
  \begin{aligned}
  d\bb x (t) &= \left(\frac{1}{N}P(\bb x(t))\bb x(t) - \frac{1}{N}\bb x(t)\bb s(\bb x(t)) + \bb u(t)\right)dt +\sigma d\bb W (t) \\
             &= \bb F(t,\bb x(t)) dt + \sigma d\bb W (t),
  \end{aligned}
\end{equation}
where $\bb x(t)=(x_k(t))\in\RR^{N}$ is the vector containing the position of each
agent $k$ at time $t$, $P(\bb x(t))=(p_{k\ell}(t))\in\RR^{N\times N}$
(with $p_{k\ell}(t)=p(x_k(t),x_\ell(t))$) is the matrix representing the interactions
among agents, $\bb s(x(t)) = (s_k(t))\in \RR^N$ (with $s_k(t)=\sum_{\ell}p(x_k(t),x_\ell(t))$), 
$\bb u(t) = (u_k(t))\in \RR^N$ is the vector containing the control
for each agent $k$, and $d\bb W(t)=(dW_k(t))\in\RR^N$ is the vector of the Brownian
motions.
Then, employing~\eqref{eq:serbaut} we get the time marching
\begin{equation}\label{eq:exp_int_discr}
  \bb x^{n+1} = \bb x^n
  +\tau\varphi_1(\tau J_n)(\bb F(\bb x^n) + \sigma J_n d \bb W^n)
  + \sigma d \bb W^n + \tau^2\varphi_2(\tau J_n)\bb v_n,
\end{equation}
where the element $(k,\ell)$ of the matrix $J_n$ is given by
\begin{equation*}
  j_{k\ell} =
  \begin{cases}
  \frac{1}{N}\sum\limits_{i\neq k}\left(\frac{\partial}{\partial x}p(x,y)\lvert_{(\bb x^n_k, \bb x^n_i)} (\bb x^n_i - \bb x^n_k) - p(\bb x^n_k,\bb x^n_i)\right) & \text{if } k = \ell \\
  \frac{1}{N}\left(\frac{\partial}{\partial y}p(x,y)\lvert_{(\bb x^n_k, \bb x^n_\ell)} (\bb x^n_\ell - \bb x^n_k) + p(\bb x^n_k,\bb x^n_\ell)\right)& \text{otherwise}
  \end{cases}
\end{equation*}
while the $k$-th element of $\bb v_n$ is $u'_k(t_n)$.

\section{The turnpike property for the time-discrete stochastic control problem}\label{sec:turnpike}
In this section, we focus on the turnpike property with \emph{interior decay} in the context of stochastic control problem for interacting agents, discretized in time using exponential integrators (introduced in Section \ref{sec:timedisc}).

In order to study the turnpike property in this setting, we first consider the discretized dynamics in formula~\eqref{eq:exp_int_discr}. Then, to complete the formulation, we must also discretize the cost functional~\eqref{eq:cost_functional}. To this aim, we employ a first-order quadrature rule (rectangle left point, in particular).
Therefore, the fully discrete optimal control problem, which we denote by
$\mathcal{Q}(\tau, t_0, T, \bb{x}^0)$, is formulated as
\begin{equation}
\min_{\bb u} \mathcal C^{\tau}(\bb x, \bb u):= \mathbb{E} \left[  \frac{\tau}{N} \sum_{n=0}^{m-1}\sum_{k=1}^{N}  \left( | x^n_k - \bar{x} |^2 + \gamma | u^n_k |^2 \right) \right],
\label{eq:discrete_cost_functional}
\end{equation}
where we recall that $x_k^n \in \mathbb{R}$ approximates $x_k(t_n)$ (i.e., the position of the $k$-th agent at the $n$-th time step) through the dynamics, and $u_k^n \in \mathbb{R}$ is the control applied at the same agent and step.
Additionally, we define the \emph{running cost} at each time step, which describes the contribution of the state and control at time step $n$ to the overall cost functional. This is given by
\begin{equation}\label{eq:running_cost}
    c(\bb x^n, \bb u^n) := \mathbb{E} \left[ \frac{1}{N} \sum_{k=1}^{N} \left( | x^n_k - \bar{x} |^2 + \gamma | u^n_k |^2  \right) \right].
\end{equation}
The running cost $c$ measures the deviation of the system from the desired state $\bar{x}$ and penalizes the control effort at each discrete time step $n$. In these terms, the minimization in formula~\eqref{eq:discrete_cost_functional} can be written as
\begin{equation*}
\min_{\bb u} \mathcal C^{\tau}(\bb x, \bb u) = \tau\sum_{n=0}^{m-1}c(\bb x^n, \bb u^n).
\end{equation*}
Notice that the time-discrete optimization problem \eqref{eq:discrete_cost_functional} depends also on the initial time $t_0$, the terminal time $T$, and the discretization step size $\tau$. In the time-discrete setting, we assume that these parameters are chosen so that the number of time steps $m$ is given by
\begin{equation}\label{eq:timestep}
    m = \frac{T - t_0}{\tau},
\end{equation}
where $m \in \mathbb{N}$ represents the total number of time steps between $t_0$ and $T$. The discrete-time optimization problem thus consists of minimizing the cost functional over $m$ time steps.
The optimal value of the time-discrete problem is denoted by
\begin{equation*}
    \mathcal V^{(\tau, t_0, T)}(\bb x^0) := \min_{\bb u} \mathcal{C}^{\tau}(\bb x, \bb u),
\end{equation*}
where $\bb x^0$ is the initial state of the system, representing the positions of all agents at the initial time $t_0$.

This formulation captures the time-discrete version of the stochastic control problem, enabling us to apply the turnpike property analysis in the time-discrete setting.
In the next steps, we will examine the conditions under which the turnpike property holds for this time-discrete stochastic control problem. Specifically, we will investigate the proximity of the optimal dynamic trajectory to the corresponding static solution, particularly in the interior of the time horizon. In the upcoming analysis, given a state vector $\bb{x} = ({x}_1, {x}_2, \dots, {x}_N)^{\sf T}$ in $\mathbb{R}^{N}$, we consider the norm
\begin{equation*}
\|\bb{x}\|= \sqrt{\sum_{k=1}^{N} |{x}_k|^2}.
\end{equation*}  
This corresponds to the standard Euclidean norm in $\mathbb{R}^N$ and will serve as our primary measure of magnitude for the state vector.

\subsection{The strict dissipativity property}
A key requirement for establishing the turnpike property is the concept of \emph{strict dissipativity} of the cost functional. This notion, originally introduced in~\cite{willems1972dissipative}, plays a fundamental role in optimal control theory. Dissipativity provides a connection between the system's dynamics and its cost structure, ensuring that the system’s behavior converges to an optimal steady-state over time. While widely used in continuous-time control systems, here we extend it to the time-discrete setting, focusing on the optimal control problem $\mathcal{Q}(\tau, t_0, T, \bb{x}^0)$ formulated for interacting agents. 

To formalize strict dissipativity in this context, we introduce the key components. The \emph{supply rate function} $\eta(\bb{x}^n, \bb{u}^n)$ measures the deviation of the current state-control pair from the optimal static pair $(\tilde{\bb{x}} , \tilde{\bb{u}} )$. The \emph{storage function} $S: \mathbb{R}^{N} \to \mathbb{R}$, bounded from below acts as an energy-like function capturing the system’s stored cost. The \emph{dissipation function} $\alpha: [0, \infty) \to [0, \infty)$, continuous and monotone increasing with $\alpha(0) = 0$, quantifies the rate at which deviations from the optimal steady-state are penalized.
In our setting, the supply rate function is chosen as
\begin{equation*}
    \eta(\bb{x}^n, \bb{u}^n) = c(\bb{x}^n, \bb{u}^n) - c(\tilde{\bb{x}}, \tilde{\bb{u}}),
\end{equation*}
where $c(\cdot,\cdot)$ is the running cost defined in \eqref{eq:running_cost}. Since $(\tilde{\bb{x}}, \tilde{\bb{u}})$ is the optimal static pair minimizing the running cost, we have $c(\tilde{\bb{x}}, \tilde{\bb{u}}) = 0$. Thus, the supply function simplifies to $\eta(\bb{x}^n, \bb{u}^n) = c(\bb{x}^n, \bb{u}^n)$, which directly quantifies how far the system is from the optimal steady-state.
To ensure strict dissipativity, we require the existence of a storage function $S(\cdot)$ and a dissipation function $\alpha(\cdot)$ satisfying a key inequality. Specifically, defining $\alpha(y) = \frac{\gamma}{2N} y^2$ and assuming a control penalization parameter $\gamma \in (0,1]$, we ensure that larger deviations from the optimal steady-state incur a greater cost, promoting convergence.

We now present the complete statement of strict dissipativity.
\begin{lemma}\label{lem:diss}
Let the control penalization parameter $\gamma \in (0,1]$. The time-discrete optimal control problem $\mathcal{Q}(\tau, t_0, T, \bb{x}^0)$ is strictly dissipative with respect to the supply rate function $\eta(\bb{x}^n, \bb{u}^n)$. That is, there exists a storage function $S: \mathbb{R}^{N} \to \mathbb{R}$, bounded from below, and a continuous, monotone increasing function $\alpha: [0, \infty) \to [0, \infty)$ with $\alpha(0) = 0$, such that for all state-control pairs $(\bb{x}^n, \bb{u}^n)$ satisfying the discrete dynamics \eqref{eq:exp_int_discr}, the following inequality holds in expectation over the noise
\begin{equation}\label{eq:dissipativity_ineq}
    \mathbb{E}\left[S(\bb{x}^n) + \tau \, \eta(\bb{x}^n, \bb{u}^n)\right] \geq \mathbb{E}\left[S(\bb{x}^{n+1}) + \tau \, \alpha\left(\|\bb{x}^n - \tilde{\bb{x}}\| + \|\bb{u}^n - \tilde{\bb{u}}\| \right)\right].
\end{equation}
\end{lemma}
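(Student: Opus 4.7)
The plan is to verify the dissipativity inequality with the trivial storage function $S \equiv 0$, exploiting the fact that the chosen static pair in \eqref{eq:stationary_solution} makes the running cost vanish, i.e.\ $c(\tilde{\bb x}, \tilde{\bb u}) = 0$, so that $\eta(\bb x^n, \bb u^n) = c(\bb x^n, \bb u^n)$. With $S \equiv 0$, the function is trivially bounded from below and $\mathbb{E}[S(\bb x^n)] = \mathbb{E}[S(\bb x^{n+1})] = 0$ independently of the discrete dynamics \eqref{eq:exp_int_discr}. It therefore suffices to establish the pointwise (in $\omega$) bound $c(\bb x^n, \bb u^n) \geq \alpha(\|\bb x^n - \tilde{\bb x}\| + \|\bb u^n - \tilde{\bb u}\|)$ and to take expectation.

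Next, I would substitute $\tilde x_k = \bar x$ and $\tilde u_k = 0$ into the definition \eqref{eq:running_cost} of $c$, so that the integrand inside the expectation rewrites as $(1/N)\|\bb x^n - \tilde{\bb x}\|^2 + (\gamma/N)\|\bb u^n - \tilde{\bb u}\|^2$, using the Euclidean norm introduced just above the statement. On the right-hand side, applying the elementary estimate $(a+b)^2 \leq 2(a^2+b^2)$ to the argument of $\alpha(y) = \gamma y^2/(2N)$ yields
\[
\alpha\bigl(\|\bb x^n - \tilde{\bb x}\| + \|\bb u^n - \tilde{\bb u}\|\bigr) = \frac{\gamma}{2N}\bigl(\|\bb x^n - \tilde{\bb x}\| + \|\bb u^n - \tilde{\bb u}\|\bigr)^2 \leq \frac{\gamma}{N}\bigl(\|\bb x^n - \tilde{\bb x}\|^2 + \|\bb u^n - \tilde{\bb u}\|^2\bigr).
\]
The desired inequality then follows by term-by-term comparison: the control contribution matches exactly at coefficient $\gamma/N$, while the state contribution requires $1/N \geq \gamma/N$, which is precisely the assumption $\gamma \in (0,1]$.

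There is essentially no obstacle here beyond bookkeeping; the argument is purely algebraic, because selecting the deterministic equilibrium $(\bar{x}, 0)$ in \eqref{eq:stationary_solution} renders the static cost zero and permits the trivial storage $S \equiv 0$. The main point worth flagging is why the assumption $\gamma \leq 1$ is sharp for this specific choice of $\alpha$: it is exactly what absorbs the factor of $2$ lost in the polarization $(a+b)^2 \leq 2(a^2 + b^2)$, ensuring that the smaller of the two quadratic coefficients in $c$ still dominates the prefactor $\gamma/(2N)$ in $\alpha$. Since the dynamics \eqref{eq:exp_int_discr} do not enter at all, strict dissipativity persists for the stochastic discrete scheme regardless of the choice of the interaction kernel $p$ or of the noise amplitude $\sigma$.
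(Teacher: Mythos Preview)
Your proof is correct and follows essentially the same approach as the paper: choose the trivial storage $S\equiv 0$ and $\alpha(y)=\frac{\gamma}{2N}y^2$, apply $(a+b)^2\le 2(a^2+b^2)$, use $\gamma\le 1$ to absorb the state term, and take expectations. Your additional observation that the discrete dynamics \eqref{eq:exp_int_discr} play no role in the argument is a worthwhile remark.
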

\begin{proof}
To establish strict dissipativity, we recall that $c(\tilde{\bb{x}}, \tilde{\bb{u}}) = 0$ and the choices $\eta(\bb{x}^n, \bb{u}^n) = c(\bb{x}^n, \bb{u}^n)$ and $\alpha(y) = \frac{\gamma}{2N} y^2$. Then,
using these definitions and the properties of the optimal static pair, we derive the following sequence of inequalities
\begin{align*}
    \alpha\left( \|\bb{x}^n - \tilde{\bb{x}}\| + \|\bb{u}^n - \tilde{\bb{u}}\| \right)
    &= \frac{\gamma}{2N} \left( \|\bb{x}^n - \tilde{\bb{x}}\| + \|\bb{u}^n - \tilde{\bb{u}}\| \right)^2 \\
    &\leq \frac{\gamma}{N} \left( \|\bb{x}^n - \tilde{\bb{x}}\|^2 + \|\bb{u}^n - \tilde{\bb{u}}\|^2 \right) \\
    &= \frac{\gamma}{N} \sum_{k=1}^N \left( |x_k^n - \tilde x_k|^2 + |u_k^n - \tilde u_k|^2 \right) \\
    &= \frac{\gamma}{N} \sum_{k=1}^N \left( |x_k^n - \bar{x}|^2 + |u_k^n|^2 \right) \\
    &\leq \frac{1}{N} \sum_{k=1}^N \left( |x_k^n - \bar{x}|^2 + \gamma |u_k^n|^2 \right) \\
    &= c(\bb{x}^n, \bb{u}^n).
\end{align*}
Here we also used the fact that $\gamma \leq 1$. Finally, since the noise in the system must be taken into account, we incorporate expectations on both sides of the dissipativity inequality. This ensures that the inequality holds on average, accounting for the stochastic nature of the problem. Thus, multiplying both sides by $\tau$, and choosing $S(\cdot)=0$, we can write  the strict dissipativity condition in the mean sense
\begin{equation*}
    \mathbb{E}\left[S(\bb{x}^n) + \tau \, \eta(\bb{x}^n, \bb{u}^n)\right] \geq \mathbb{E}\left[S(\bb{x}^{n+1}) + \tau \, \alpha\left(\|\bb{x}^n - \tilde{\bb{x}}\| + \|\bb{u}^n - \tilde{\bb{u}}\| \right)\right].
\end{equation*}
This concludes the proof.
\end{proof}
Remark that, in our setting, we obtain that the time-discrete optimal control problem $\mathcal{Q}(\tau, t_0, T, \bb{x}^0)$ satisfies the strict dissipativity property~\eqref{eq:dissipativity_ineq} with $S(\cdot)=0$ and $\alpha(y)=\frac{\gamma}{2N}y^2$. This in particular means that deviations from the static optimal solution are penalized in a way that guarantees convergence to the static optimal pair $(\tilde{\bb{x}}, \tilde{\bb{u}})$ as the time horizon progresses.

\subsection{The cheap control condition}
In the pursuit of establishing the turnpike property, we identify the cheap control condition as the second essential ingredient, following the dissipativity condition we explored earlier. While the dissipativity condition ensures that the system's behavior remains well-defined and controllable, the cheap control assumption introduces a crucial bound that relates to the distance between the initial state and the steady-state solution of the time-discrete problem. The combination of these two properties will ultimately facilitate our analysis of the turnpike phenomenon.

The cheap control condition asserts that the cost required to steer the system from an initial state to the steady-state solution remains bounded by a function of the initial deviation and a term related to storage function differences. This condition provides a measure of how efficiently the system can be controlled and ensures that the overall control effort remains manageable.
To properly set up the framework, we introduce the following assumption.
\begin{ass}\label{ass:bounded}
We assume that the interaction kernel $p(\cdot, \cdot)$ in \eqref{eq:dynamics} is bounded, i.e., there exists a constant $M_p > 0$ such that 
\[
|p(x, y)| \leq M_p, \quad \text{for all } x, y \in \mathbb{R}.
\]
\end{ass}
The boundedness of the kernel ensures that the interaction effects remain controlled and do not grow unreasonably large. This assumption is important for guaranteeing stability in the system's dynamics and facilitating the analysis of control strategies, even in cases where the interactions may be asymmetric.
We now present the formal statement of the cheap control condition.
\begin{lemma}\label{lem:cheap}
Under Assumption \ref{ass:bounded}, the optimal control problem $\mathcal{Q}(\tau, t_{0}, T, \bb{x}^{0})$ is cheaply controllable, that is, there exist constants $C_0 > 0$ and $\varepsilon_0 \geq 0$ such that for all initial times $t_{0}$, all initial states $\bb{x}^{0}$, and for all terminal times $T > t_{0}$, the following inequality holds
\begin{equation}\label{eq:cheap}
    \mathcal{V}^{(\tau, t_{0}, T)}(\bb{x}^{0}) \leq \mathbb{E} \left[ C_{0} \, \alpha(\|\bb{x}^{0} - \tilde{\bb{x}}\|) + \varepsilon_{0} + S(\bb{x}^{m}) - S(\bb{x}^{0}) \right].
\end{equation}
\end{lemma}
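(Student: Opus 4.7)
The natural approach is to upper bound the value function $\mathcal{V}^{(\tau, t_0, T)}(\bb{x}^0)$, which is an infimum over admissible controls, by exhibiting an explicit feasible control strategy and estimating its cost. The plan is to construct a one-step steering control $\bb{u}^0$ that drives the conditional expectation $\mathbb{E}[\bb{x}^1 \mid \mathcal{F}_0]$ to the static optimum $\tilde{\bb{x}} = \bar{x}\bb{1}$, and then for $n\geq 1$ to choose $\bb{u}^n$ analogously so that $\mathbb{E}[\bb{x}^{n+1}\mid \mathcal{F}_n] = \tilde{\bb{x}}$ at every step. This mirrors the deterministic strategy of~\cite{GugatHertyLiuSegala2024} but must be adapted to both the stochastic forcing and the exponential Rosenbrock update.

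To construct such a control, I would use~\eqref{eq:exp_int_discr} and solve, for each $n$, the linear equation in $\bb{u}^n$ that enforces $\mathbb{E}[\bb{x}^{n+1}\mid \mathcal{F}_n] = \tilde{\bb{x}}$. The quantitative heart of the estimate leverages Assumption~\ref{ass:bounded}: at $\tilde{\bb{x}}$ the interaction vanishes, and since $(P(\bb{x})\bb{x} - \bb{x}\bb{s}(\bb{x}))_k = \sum_\ell p_{k\ell}(x_\ell - x_k)$, the bound $|p|\leq M_p$ yields
\begin{equation*}
\left\|\tfrac{1}{N}\bigl(P(\bb{x})\bb{x} - \bb{x}\bb{s}(\bb{x})\bigr)\right\| \leq 2 M_p \,\|\bb{x} - \tilde{\bb{x}}\|.
\end{equation*}
Hence the control amplitude needed at step $n$ is controlled by $\|\bb{x}^n - \tilde{\bb{x}}\|$, up to an explicit factor depending on $\tau$ and on the norm of $\varphi_1(\tau J_n)$ appearing in the integrator.

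I would then sum the running costs $\tau\, c(\bb{x}^n, \bb{u}^n)$ over $n = 0, \ldots, m-1$. The $n=0$ contribution is governed by the (deterministic) deviation $\|\bb{x}^0 - \tilde{\bb{x}}\|^2$ and, with $\alpha(y) = \tfrac{\gamma}{2N} y^2$, produces the $C_0 \alpha(\|\bb{x}^0-\tilde{\bb{x}}\|)$ term. For $n \geq 1$, since the conditional expectation is reset to $\tilde{\bb{x}}$ at each step, only the single-step noise increment $\sigma\, d\bb{W}^{n-1}$ propagated through $\varphi_1(\tau J_{n-1})$ contributes to $\mathbb{E}[\|\bb{x}^n - \tilde{\bb{x}}\|^2]$, giving a per-step cost of order $\sigma^2\tau$ whose cumulated contribution is absorbed into $\varepsilon_0$. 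The storage terms drop out because $S\equiv 0$ by Lemma~\ref{lem:diss}.

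The main obstacle is the matrix function $\varphi_1(\tau J_n)$: it appears both in the steering equation (which implicitly requires inverting a related linear map) and in the noise propagation. Bounding its norm uniformly in $n$ and in $\bb{x}^0$ needs spectral control of $J_n$, which goes slightly beyond Assumption~\ref{ass:bounded} and may implicitly rely on a Lipschitz assumption on the partial derivatives of $p$ entering the entries of $J_n$. A secondary delicate point is that summing $O(\sigma^2\tau)$ contributions over $m = (T-t_0)/\tau$ steps naturally yields an $O(\sigma^2 (T-t_0))$ term; ensuring $\varepsilon_0$ is genuinely independent of $T$ would therefore require replacing the one-shot steering by a feedback that geometrically contracts the variance, so that the noise-induced running cost remains summable uniformly in the horizon.
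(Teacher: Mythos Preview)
Your one-shot steering strategy is a different route from the paper's, and as you yourself diagnose, it runs into two real obstructions. The $T$-dependence of $\varepsilon_0$ is fatal for the lemma as stated: the constants must be uniform in $T$, so an accumulated noise term of order $\sigma^2(T-t_0)$ does not establish cheap controllability. Your second worry---uniform control of $\varphi_1(\tau J_n)$ and of the inverse needed to solve the steering equation---is also genuine: the entries of $J_n$ involve $\partial_x p$ and $\partial_y p$, and Assumption~\ref{ass:bounded} says nothing about these derivatives, so a uniform spectral bound on $J_n$ is not available from the stated hypotheses.

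The paper's proof sidesteps both issues by precisely the mechanism you gesture at in your final sentence, but with a specific choice that makes everything explicit. Instead of dead-beat steering, it employs the feedback
\[
u_k^n=\mathbb{E}\Bigl[\beta(\bar x-x_k^n)-\tfrac{1}{N}\sum_\ell p(x_k^n,x_\ell^n)(x_\ell^n-x_k^n)\Bigr],
\]
which cancels the interaction term in expectation and leaves the \emph{mean} dynamics linear with scalar Jacobian $-\beta$. The exponential Rosenbrock--Euler scheme is exact on this linear equation, so the Lyapunov function $L^n=\|\hat{\bb x}^n-\tilde{\bb x}\|^2$ satisfies $L^{n+1}=\rme^{-2\beta\tau}L^n$ exactly. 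The running cost is then dominated by a geometric series in $n$, yielding $\varepsilon_0=0$ and a $C_0$ independent of $T$. Because the relevant Jacobian is the scalar $-\beta$ rather than the full $J_n$, no bound on $\varphi_1(\tau J_n)$ is ever needed, and Assumption~\ref{ass:bounded} enters only to control $\|\bb u^n\|^2$ via $|p|\le M_p$. In short, the paper's key idea is to choose the feedback so that the discretized mean dynamics becomes trivially integrable, rather than to analyze the nonlinear discrete map you set up.
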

\begin{proof}
We prove the cheap controllability condition with $\varepsilon_{0} = 0$. As in the proof of Lemma \ref{lem:diss}, we select $\alpha(y) = \frac{\gamma}{2N} y^2$ and $S(\cdot) = 0$.
Then, with these choices, to establish the validity of inequality \eqref{eq:cheap}, we need to prove that there exists a constant ${C}_{0}$ such that for all initial times $t_{0}$, terminal times $T$, and initial states $\bb{x}^{0}$, the following relationship holds
\begin{equation*}
    \mathcal{V}^{(\tau, t_{0}, T)}(\bb{x}^{0}) \leq {C}_{0} \, \mathbb{E}\left[{\frac{\gamma}{2N}} \|\bb{x}^{0} - \tilde{\bb{x}}\|^{2}\right].
\end{equation*}
To prove it, we can leverage a stabilizing feedback control law, similar to the one discussed in \cite{main}, which accommodates the stochastic dynamics of the system. This feedback mechanism is pivotal as it facilitates the exponential decay of the running cost $c$ defined in \eqref{eq:running_cost}.
Let us consider a parameter $\beta>0$.
For each agent $k = 1, \ldots, N$ and time step $n = 0, \ldots, m$, we define the \textit{cheap} control input as follows
\begin{equation}\label{eq:cheap_ctr}
    u_{k}^{n} = \mathbb{E}\left[ u_{k}^{n}(\omega) \right] = \mathbb{E}\left[ \beta(\bar{x} - x_{k}^{n}) - \frac{1}{N} \sum_{\ell=1}^{N} p(x_{k}^{n}, x_{\ell}^{n})(x_{\ell}^{n} - x_{k}^{n}) \right].
\end{equation}
The corresponding state $\bb x^n$ evolves from any initial state $\bb x^0$ in accordance with the scheme described by~\eqref{eq:exp_int_discr}, using the mean control strategy in~\eqref{eq:cheap_ctr}. Observe that at this stage we use the autonomous version of scheme~\eqref{eq:exp_int_discr} (in particular $\bb v_n = 0$), because the control in \eqref{eq:cheap_ctr} does not depend on time.
If we substitute the control law \eqref{eq:cheap_ctr} into the stochastic dynamics \eqref{eq:dynamics} and we take the expectation on both sides (since our focus is on the mean evolution of the agents' states), after simple calculations we obtain the following
\begin{equation}\label{eq:dyn_mean}
  \frac{d}{dt} \mathbb{E}\left[ x_k(t) \right] = \beta \, \bar{x} - \beta \, \mathbb{E}\left[ x_{k}(t)) \right].
\end{equation}
Applying now the \textit{stochastic exponential Rosenbrock--Euler} method we get
\begin{equation}\label{eq:expint_mean}
  \hat{\bb x}^{n+1} = \hat{\bb x}^n
  +\tau \, \varphi_1(-\beta \tau) \, \beta \, ( \bar{\bb x} - \hat{\bb x}^{n}),
\end{equation}
where we defined $\hat{\bb x}^n := \mathbb{E}\left[ \bb x^n \right]$.
Notice that in~\eqref{eq:dyn_mean} where we are dealing with the mean dynamics, the \textit{stochastic exponential Rosenbrock--Euler} method effectively coincides with the standard \textit{exponential Rosenbrock--Euler} method. This is because, in the evolution of the mean, the uncertainty introduced by the noise term disappears, as the expectation of the Brownian motion is zero. Then, we are left with a purely deterministic evolution for the mean. Furthermore, since the exponential Rosenbrock--Euler method is exact for linear equations (and \eqref{eq:dyn_mean} is linear), the numerical scheme \eqref{eq:expint_mean} is not just an approximation, but actually represents the exact solution to the mean dynamics. This is not true, e.g., if we employ a standard explicit method as explicit Euler.

To measure how close the system is to the static state, we introduce the quadratic Lyapunov function
\begin{equation*}
L^n = \| \hat{\bb x}^n - \tilde{\bb{x}}\|^{2}.
\end{equation*}
Using~\eqref{eq:expint_mean}, the evolution of this function is governed by the following recurrence
\begin{equation*}
L^{n+1} = \|\hat{\bb x}^{n+1} - \tilde{\bb{x}}\|^{2} = \|\hat{\bb x}^{n} - \tilde{\bb{x}} - \tau \beta   \, \varphi_1(-\beta \tau) (\hat{\bb x}^{n} - \tilde{\bb{x}})\|^{2}.
\end{equation*}
Expanding this expression, we get
\begin{equation*}
L^{n+1} = (1 - \tau \beta   \, \varphi_1(-\beta \tau) )^2 \|\hat{\bb x}^{n} - \tilde{\bb{x}}\|^{2} = \rme^{-2\beta\tau} L^n.
\end{equation*}
This shows that the Lyapunov function decays exponentially at each time step. Thus, for all $n = 0, \dots, m$,
\begin{equation*}
L^n = \rme^{-2n\beta\tau} L^0 = \rme^{-2n\beta\tau} \|\hat{\bb{x}}^{0} - \tilde{\bb{x}}\|^{2}.
\end{equation*}
Now, starting from the \textit{cheap} control \eqref{eq:cheap_ctr}, we apply the triangle inequality and we use the boundedness of \( p \) in Assumption \ref{ass:bounded}:

\begin{align*}
|u_{k}^{n}|
&\leq \beta | \bar{x} - \hat{x}_{k}^{n} | + \frac{M_p}{N} \sum_{\ell=1}^{N} \left( |\hat x_{\ell}^{n} -  \bar{x}| + |\hat x_{k}^{n} - \bar{x}| \right) \notag \\
&= (\beta + M_p) | \bar{x} - \hat{x}_{k}^{n} | + \frac{M_p}{N} \sum_{\ell=1}^{N} |\hat x_{\ell}^{n} -  \bar{x}|.
\end{align*}
We remind that we indicate $\hat x_k^n = \mathbb E [x_k^n]$. Next, squaring both sides and applying the inequality \( |a + b|^2 \leq 2(|a|^2 + |b|^2) \), as well as the Cauchy-Schwarz inequality as \( \left( \sum_{i=1}^N a_i \right)^2 \leq N \sum_{i=1}^N  a_i^2 \), we get:

\begin{align*}
|u_{k}^{n}|^2 
&\leq 2 \left[ (\beta + M_p)^2 | \bar{x} - \hat{x}_{k}^{n} |^2 + \frac{M_p^2}{N} \sum_{\ell=1}^{N} |\hat x_{\ell}^{n} -  \bar{x}|^2 \right].
\end{align*}
Summing over \( k \)
we obtain:

\begin{align*}
\| \bb u^n \|^2 
&\leq 2 \left[ (\beta + M_p)^2 \sum_{k=1}^{N} | \bar{x} - \hat{x}_{k}^{n} |^2 + M_p^2 \sum_{\ell=1}^{N} |\hat x_{\ell}^{n} -  \bar{x}|^2 \right]  \\
&= 2 (\beta + M_p)^2 \left\lVert \hat{\bb x}^{n} - \tilde{\bb{x}}\right\lVert^2 + 2 M_p^2 \left\lVert \hat{\bb x}^{n} - \tilde{\bb{x}}\right\lVert^2  \\
&= 2 \left( (\beta + M_p)^2 + M_p^2 \right) \left\lVert \hat{\bb x}^{n} - \tilde{\bb{x}}\right\lVert^2.
\end{align*}
Defining the positive constant $\beta_p := 2 \left( (\beta + M_p)^2 + M_p^2 \right)$, we obtain
\begin{align*}
    \|\bb{u}^n\|^2 \leq \beta_p \left\lVert \hat{\bb x}^{n} - \tilde{\bb{x}}\right\lVert^2.
\end{align*}
We can then bound the total cost $\mathcal{V}^{(\tau, t_0, T)}(\bb{x}^0)$
\begin{align*}
    \mathcal{V}^{(\tau, t_0, T)}(\bb{x}^0) &= \min_{u} \mathbb{E} \left[  \frac{\tau}{N} \sum_{n=0}^{m-1}\sum_{k=1}^{N}  \left( | x^n_k - \bar{x} |^2 + \gamma | u^n_k |^2 \right) \right]
    \\
    &\leq \sum_{n=0}^{m-1} \frac{\tau}{N} \, \mathbb{E} \left[ \|\bb{x}^{n} - \tilde{\bb{x}}\|^{2} + \gamma \|\bb{u}^{n}\|^{2} \right]
    \\
    &\leq \sum_{n=0}^{m-1} \frac{\tau}{N} \left( 1 + \gamma \, \beta_p \right) L^n.
\end{align*}
Substituting the exponential decay of $L^n$, we obtain
\begin{equation*}
\mathcal{V}^{(\tau, t_0, T)}(\bb{x}^0) \leq \sum_{n=0}^{m-1} \frac{\tau}{N} \left( 1 + \gamma \, \beta_p \right) \rme^{-2n\beta\tau} L^0.
\end{equation*}
Since the series $\sum_{n=0}^{\infty} \rme^{-2n\beta\tau}$ converges, we can write
\begin{equation*}
\mathcal{V}^{(\tau, t_0, T)}(\bb{x}^0) \leq \widetilde{C} \, \frac{1}{N}\|\hat{\bb{x}}^{0} - \tilde{\bb{x}}\|^{2} \leq 
 {C}_{0} \, \mathbb{E}\left[{\frac{\gamma}{2N}} \|\bb{x}^{0} - \tilde{\bb{x}}\|^{2}\right],
 \end{equation*}
where 
\begin{equation*}
C_0 = \frac{2}{\gamma}\widetilde{C} = \frac{2 \tau \, (1 + \gamma \, \beta_p )}{\gamma(1 - \rme^{-2\beta\tau}) }>0
\end{equation*}
is a constant that depends on the control parameters. This concludes the proof.
\end{proof}

\subsection{The turnpike property with interior decay}
In this section, we establish the turnpike property with interior decay. We recall that this concept highlights that the optimal solution of a \emph{dynamic} optimal control problem remains close to the corresponding \emph{static} optimal solution for most of the time interval, provided the time horizon is sufficiently large.

Our main final result is the following (Theorem \ref{thm:turnpike_stochastic}), which will be proven by leveraging the dissipativity inequality established earlier and the property of cheap controllability. 
\begin{theorem}\label{thm:turnpike_stochastic}
The solution to the optimal control problem \(\mathcal{Q}(\tau, t_0, T, \bb{x}^0)\), as described in \eqref{eq:discrete_cost_functional}, exhibits the turnpike property with a decay towards the interior. Specifically, there exist constants \(C_1 > 0\), \(\lambda \in (0,1)\), and a non-negative, monotonically increasing function \(\alpha\) with \(\alpha(0) = 0\), such that for any \(T > t_0\) and any \(m \in \mathbb{N}\) satisfying \eqref{eq:timestep} the following holds
\begin{equation*}
\mathbb{E}\left[\sum_{n=\left \lfloor (1-\lambda)m\right \rfloor}^{m-1} \tau \, \alpha\left(\|\bb{x}^n - \tilde {\bb{x}}\| + \|\bb{u}^n - \tilde{\bb{u}}\|\right)\right] \leq C_1 \, \mathbb{E} \left[ \alpha(\|\bb{x}^0 - \tilde{\bb{x}}\|) \right].
\end{equation*}
\end{theorem}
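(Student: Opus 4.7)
The plan is to combine the strict dissipativity inequality of Lemma \ref{lem:diss} with the cheap controllability bound of Lemma \ref{lem:cheap} through a telescoping argument applied along the optimal trajectory of $\mathcal{Q}(\tau, t_0, T, \bb{x}^0)$. Let $(\bb{x}^n, \bb{u}^n)_{n=0}^{m}$ denote this optimal pair. First, I would sum the dissipativity inequality \eqref{eq:dissipativity_ineq} from $n=0$ to $n=m-1$. The storage terms telescope, giving
\begin{equation*}
\tau \sum_{n=0}^{m-1} \mathbb{E}\!\left[\alpha\!\left(\|\bb{x}^n - \tilde{\bb{x}}\| + \|\bb{u}^n - \tilde{\bb{u}}\|\right)\right] \le \mathbb{E}\!\left[S(\bb{x}^0) - S(\bb{x}^m)\right] + \tau \sum_{n=0}^{m-1} \mathbb{E}[\eta(\bb{x}^n, \bb{u}^n)].
\end{equation*}
Since $\eta(\bb{x}^n,\bb{u}^n) = c(\bb{x}^n,\bb{u}^n)$ (because $c(\tilde{\bb{x}},\tilde{\bb{u}})=0$), the second term on the right is exactly $\mathcal{C}^{\tau}(\bb{x},\bb{u})$ from \eqref{eq:discrete_cost_functional}, which at the optimum equals $\mathcal{V}^{(\tau,t_0,T)}(\bb{x}^0)$.

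Second, I would plug in the cheap controllability estimate \eqref{eq:cheap}, which yields
\begin{equation*}
\tau \sum_{n=0}^{m-1} \mathbb{E}\!\left[\alpha\!\left(\|\bb{x}^n - \tilde{\bb{x}}\| + \|\bb{u}^n - \tilde{\bb{u}}\|\right)\right] \le C_0 \, \mathbb{E}[\alpha(\|\bb{x}^0 - \tilde{\bb{x}}\|)] + \varepsilon_0,
\end{equation*}
since the $\mathbb{E}[S(\bb{x}^m)-S(\bb{x}^0)]$ contributions cancel exactly. Specialising to the concrete choices produced by the proofs of Lemmas \ref{lem:diss} and \ref{lem:cheap}, namely $S\equiv 0$ and $\varepsilon_0 = 0$, and exploiting non-negativity of $\alpha$, the interior tail from $n=\lfloor(1-\lambda)m\rfloor$ to $m-1$ is dominated by the full sum from $0$ to $m-1$, delivering the stated inequality with $C_1 = C_0$ and any $\lambda \in (0,1)$.

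The one technical point that I expect to require care is the identification $\tau \sum_n \mathbb{E}[c(\bb{x}^n,\bb{u}^n)] = \mathcal{V}^{(\tau,t_0,T)}(\bb{x}^0)$ along the optimal trajectory: the control $\bb{u}^n$ is an $\mathcal{F}_{t_n}$-measurable random variable and the minimisation in \eqref{eq:discrete_cost_functional} is taken over admissible adapted controls, so this is a standard but non-trivial stochastic dynamic-programming bookkeeping step. Everything else is algebraic telescoping. It is worth remarking that, because the resulting bound is already uniform in $m$, the parameter $\lambda$ plays no quantitative role in the statement as written: the bound holds verbatim for the full sum and hence trivially for the interior tail. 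A strictly stronger decay (with a prefactor that shrinks with $\lambda m$) would instead require iterating the same sandwich from intermediate time indices via a Bellman splitting of the cost and extracting a contraction factor from the stabilising feedback constructed in the proof of Lemma \ref{lem:cheap}.
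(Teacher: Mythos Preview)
Your argument is correct and in fact strictly simpler than the paper's: you telescope the dissipativity inequality along the optimal trajectory, identify the resulting sum of running costs as $\mathcal{V}^{(\tau,t_0,T)}(\bb{x}^0)$, invoke cheap controllability once, and then bound the interior tail by the full sum, ending with $C_1=C_0$. The paper does more. After obtaining the same bound on the full sum (their inequality~\eqref{eq:turnpike_ineq}), the authors use a pigeonhole step on the first block $\{0,\dots,s-1\}$ with $s=\lfloor(1-\lambda)m\rfloor$ to locate an index $s^*$ at which $\tau\,\mathbb{E}[\alpha(\cdot)]\le \frac{1}{s}C_0\,\mathbb{E}[\alpha(\|\bb{x}^0-\tilde{\bb x}\|)]$. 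They then restart the problem at $t_{s^*}$, apply cheap controllability a second time from that intermediate state, and obtain $C_1=C_0^2/(\tau\lfloor(1-\lambda)m\rfloor)$. This constant actually \emph{decreases} as $m$ grows (for fixed $\tau$), so it encodes genuine interior decay, which is the phenomenon hinted at in the theorem's title but not visible in the statement as written. Your closing paragraph already anticipates exactly this refinement. One small caveat in your write-up: the terminal state $\bb{x}^m$ appearing in the telescoped storage term comes from the optimal trajectory, whereas the $\bb{x}^m$ in the cheap-control bound \eqref{eq:cheap} is the terminal state under the stabilising feedback; these are different random variables, so the storage contributions do not ``cancel exactly'' in general. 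You are saved only because $S\equiv 0$ in the concrete setting, which you do invoke, so the conclusion stands.
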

\begin{proof}
We recall that, under the assumptions, the problem  \(\mathcal{Q}(\tau, t_0, T, \bb{x}^0)\) fulfills the strict dissipativity inequality \eqref{eq:dissipativity_ineq} and it is cheaply controllable \eqref{eq:cheap} for $S(\cdot)=\varepsilon_{0}=0$ and $\alpha(y) = \frac{\gamma}{2 N}y^2.$
Let now \((\bb{x}^*, {\bb{u}}^*)\) represent the optimal state and control for the problem \(\mathcal{Q}(\tau, t_0, T, \bb{x}^0)\). Using the dissipativity inequality and the property of cheap control, we derive
\begin{equation}
\begin{aligned}
\mathbb{E} &\left[ \tau \sum_{n=0}^{m-1} \alpha \left(\|\bb{x}^{n,*} - \tilde{\bb{x}}\| + \|\bb{u}^{n,*} - \tilde{\bb{u}}\|\right) \right] \\
&\le \mathbb{E} \left[ \tau \sum_{n=0}^{m-1} c(\bb{x}^{n,*}, \bb{u}^{n,*}) \right] \\
&= \mathcal{V}^{(\tau, t_0, T)}(\bb{x}^0) \\
&\le \mathbb{E} \left[ C_0 \, \alpha(\|\bb{x}^0 - \tilde{\bb{x}}\|) \right].
\end{aligned}
\label{eq:turnpike_ineq}
\end{equation}
Define
\begin{equation*}
    s = \left \lfloor (1 - \lambda)m \right \rfloor,
\end{equation*}
with $\lambda\in(0,1)$. Then, there exists an index \(s^* \in \{0, 1, \dots, s - 1\}\) for which the following inequality is satisfied
\begin{equation}
   \mathbb{E} \left[ \tau \, \alpha\left(\|\bb{x}^{s^*,*} - \tilde{\bb{x}}\| + \|\bb{u}^{s^*,*} - \tilde{\bb{u}}\|\right) \right] \le \frac{1}{s} \, \mathbb{E} \left[ C_0 \, \alpha(\|\bb{x}^0 - \tilde{\bb{x}}\|) \right].
\label{eq:turnpike_ineq2}
\end{equation}
Indeed, by contradiction assume that for all \(n \in \{0, 1, \dots, s - 1\}\), the inequality
\begin{equation*}
    \mathbb{E} \left[ \tau \, \alpha\left(\|\bb{x}^{n,*} - \tilde{\bb{x}}\| + \|\bb{u}^{n,*} - \tilde{\bb{u}}\|\right) \right] > \frac{1}{s} \, \mathbb{E} \left[ C_0 \, \alpha(\|\bb{x}^0 - \tilde{\bb{x}}\|) \right]
\end{equation*}
is valid. Then, it would imply
\begin{equation*}
\mathbb{E} \left[ \tau \sum_{n=0}^{s - 1} \alpha\left(\|\bb{x}^{n,*} - \tilde{\bb{x}}\| + \|\bb{u}^{n,*} - \tilde{\bb{u}}\|\right) \right] > \mathbb{E} \left[ C_0 \, \alpha(\|\bb{x}^0 - \tilde{\bb{x}}\|) \right],
\end{equation*}
contradicting \eqref{eq:turnpike_ineq}.
Therefore, we get
\begin{equation}
\begin{aligned}
\mathbb{E}&\left[\sum_{n=s}^{m-1} \tau  \, \alpha\left(\|\bb{x}^{n,*} - \tilde{\bb{x}}\| + \|\bb{u}^{n,*} - \tilde{\bb{u}}\|\right)\right] \\
&\le \mathbb{E}\left[\sum_{n=s^*}^{m-1} \tau \, \alpha\left(\|\bb{x}^{n,*} - \tilde{\bb{x}}\| + \|\bb{u}^{n,*} - \tilde{\bb{u}}\|\right)\right] \\
&\le \mathbb{E}\left[\sum_{n=s^*}^{m-1} \tau \, c(\bb{x}^{n,*}, \bb{u}^{n,*})\right] = \mathcal{V}^{(\tau, t_{s^*}, T)}(\bb{x}^{s^*,*}).
\end{aligned}
\label{eq:turnpike_ineq3}
\end{equation}
The value \(\mathcal{V}^{(\tau, t_{s^*}, T)}(\bb{x}^{s^*,*})\) is the optimal cost for the problem \(\mathcal{Q}(\tau, t_{s^*}, T, \bb{x}^{s^*,*})\) initialized at \(t_{s^*}\) with state \(\bb{x}^{s^*,*}\). Using the cheap control result (Lemma~\ref{lem:cheap}), we have
\begin{equation}
    \mathcal{V}^{(\tau, t_{s^*}, T)}(\bb{x}^{s^*,*}) \le \mathbb{E} \left[ C_0 \, \alpha(\|\bb{x}^{s^*,*} - \tilde{\bb{x}}\|) \right].
\label{eq:turnpike_ineq4}
\end{equation}
Combining \eqref{eq:turnpike_ineq2} with \eqref{eq:turnpike_ineq4}, exploiting that $\alpha(\cdot)$ is monotone increasing, we find
\begin{equation*}
    \mathcal{V}^{(\tau, t_{s^*}, T)}(\bb{x}^{s^*,*}) \le \frac{1}{\tau s}  \, \mathbb{E} \left[ C_0^2 \, \alpha(\|\bb{x}^0 - \tilde{\bb{x}}\|) \right].
\end{equation*}
Finally, substituting this into \eqref{eq:turnpike_ineq3} gives
\begin{equation*}
    \mathbb{E}\left[\sum_{n=s}^{m-1} \tau \alpha\left(\|\bb{x}^{n,*} - \tilde{\bb{x}}\| + \|\bb{u}^{n,*} - \tilde{\bb{u}}\|\right)\right] \le \frac{C_0^2}{\tau s} \,  \mathbb{E} \left[ \alpha(\|\bb{x}^0 - \tilde{\bb{x}}\|) \right].
\end{equation*}
We now define the \(\tau\)-dependent constant \(C_1 = C_1(\tau)\) as
\begin{equation*}
    C_1 := \frac{C_0^2}{\tau \left \lfloor (1 - \lambda) m \right \rfloor}.
\end{equation*}
This completes the proof.
\end{proof}

\subsection{Estimation of the turnpike time \texorpdfstring{$\overline{t}$}{tbar}}\label{sec:estimate}
The turnpike time $\bar{t} = \bar{n} \tau$ corresponds to the time after which the cheap control can be switched off and replaced with the static control. 
To determine the turnpike time $\bar{t}$ theoretically, we leverage the decay of the Lyapunov functional. Recall that the Lyapunov functional is defined as 
\begin{equation*}
L^n = \|\hat{\bb{x}}^n - \tilde{\bb{x}}\|^2,
\end{equation*}
and its evolution at each time step is given by
\begin{equation*}
L^{n+1} = \rme^{-2\beta\tau} L^n.
\end{equation*}
Thus, for all \(n = 0, \dots, m\), the decay can be expressed explicitly as 
\begin{equation*}
L^n = \rme^{-2n\beta\tau} L^0 = \rme^{-2n\beta\tau} \|\hat{\bb{x}}^0 - \tilde{\bb{x}}\|^2.
\end{equation*}
To estimate $\bar{t}$, we impose a threshold $\delta > 0$ that quantifies how close the system state should be to the target state $\tilde{\bb{x}}$. Specifically, we require
\begin{equation*}
L^{\bar{n}} \leq \delta,
\end{equation*}
which, using the decay expression, translates to 
\begin{equation*}
\rme^{-2\bar{n}\beta\tau} \|\hat{\bb{x}}^0 - \tilde{\bb{x}}\|^2 \leq \delta.
\end{equation*}
Dividing by $\|\hat{\bb{x}}^0 - \tilde{\bb{x}}\|^2$ and taking the natural logarithm, we obtain
\begin{equation*}
-2\beta\bar{t}\leq \ln\left(\frac{\delta}{\|\hat{\bb{x}}^0 - \tilde{\bb{x}}\|^2}\right).
\end{equation*}
Therefore, we conclude that the turnpike time may be taken as
\begin{equation}\label{eq:tptime}
\bar{t} \geq \frac{\ln\left(\delta/\|\hat{\bb{x}}^0 - \tilde{\bb{x}}\|^2\right)}{-2\beta}.
\end{equation}
This theoretical estimate provides a systematic way to determine $\bar{t}$ based on the parameters $\beta$, the expectation of the initial state $\hat{\bb{x}}^0$, the target state $\tilde{\bb{x}}$, and the desired threshold $\delta$. By selecting an appropriate $\delta$, one can identify the time at which the cheap control can be replaced by the static control.

\section{Numerical experiments}\label{sec:numexp}
We now present several numerical experiments that demonstrate the theoretical findings of the previous sections and show the advantage of employing exponential
integrators in our context.
In all tests, we assume that the number of agents is $100$ and that they start uniformly distributed in the interval $[-1,1]$. The starting point of the time horizon is $t_0=0$, while the final simulation time is set to $T=1$.
To model the interactions, if not differently specified, we employ a Cucker--Smale-like kernel of the form
\begin{equation}\label{eq:symmker}
  p(x_k(t),x_\ell(t))=\frac{1}{\epsilon(\alpha^2+\lvert x_k(t)-x_\ell(t) \rvert^2)},
\end{equation}
with $\alpha=0.1$. Notice that the stiffness in the interaction kernel is essentially controlled by the magnitude of the parameter $\epsilon$, which changes depending on the specific example under consideration. Concerning
the stochastic noise component in \eqref{eq:dynamics}, if present, we set
$\sigma=0.01$. Also, the number of Brownian motion paths considered for the numerical
simulations is $20$.

We compare the stochastic exponential Rosenbrock--Euler
method~\eqref{eq:serb} with the well-known
Euler--Maruyama method~\cite{H01}. For the system under consideration, i.e.,
problem~\eqref{eq:dyn_matrix}, we recall that this method marches as
\begin{equation}\label{eq:eulermaru}
  \bb x^{n+1} = \bb x^n + \tau \bb F(t_n,\bb x^n) + \sigma d\bb W^n.
\end{equation}

All the numerical experiments have been performed with
MathWorks MATLAB\textsuperscript{\textregistered} R2022a on an
Intel\textsuperscript{\textregistered} Core\textsuperscript{\texttrademark}
i7-10750H CPU (six physical cores) equipped with 16GB of RAM. To compute the needed
$\varphi$ functions in the exponential integrators, we employ a Pad\'e approximation
with modified scaling and squaring~\cite{SW09}.

\subsection{Examples with no control in the dynamics}\label{sec:numexpnc}
The first test that we perform corroborates the assertion that standard explicit methods (the Euler--Maruyama scheme~\eqref{eq:eulermaru}, in
particular) do require a time step size restriction when the interaction kernel is stiff, while the stochastic exponential Rosenbrock--Euler method~\eqref{eq:serb} does not. To this aim, we set the value $\epsilon=5\cdot 10^{-2}$ and perform the time integration with $m=25$ time steps. We assume that there is no control in the dynamics, i.e., $u_k(t)=0$ for each $k$ and $t$. In this way, we expect that the agents will converge (in mean) to the average value of the initial positions, i.e., zero. 
The results of the experiment are summarized in Figure~\ref{fig:test1_stoch_stiff_a}.
As we can clearly observe, the trajectories computed using the Euler--Maruyama method (top plot) are highly oscillatory, failing to reach in a stable way the zero steady state. Further reducing the number of time steps or the value of $\epsilon$ results in increasingly worse plots. On the other hand, the outcome with the exponential
integrator (bottom plot) is in line with the expected behavior of the system
(even with such a low number of time steps), i.e., the agents eventually reach the zero position.
In fact, to obtain comparable results employing the Euler--Maruyama method, we
need to employ roughly $60$ times more number of time step (see Figure~\ref{fig:test1_stoch_stiff_b})
\begin{figure}[!htb]
  \centering
  \includegraphics[scale=0.5]{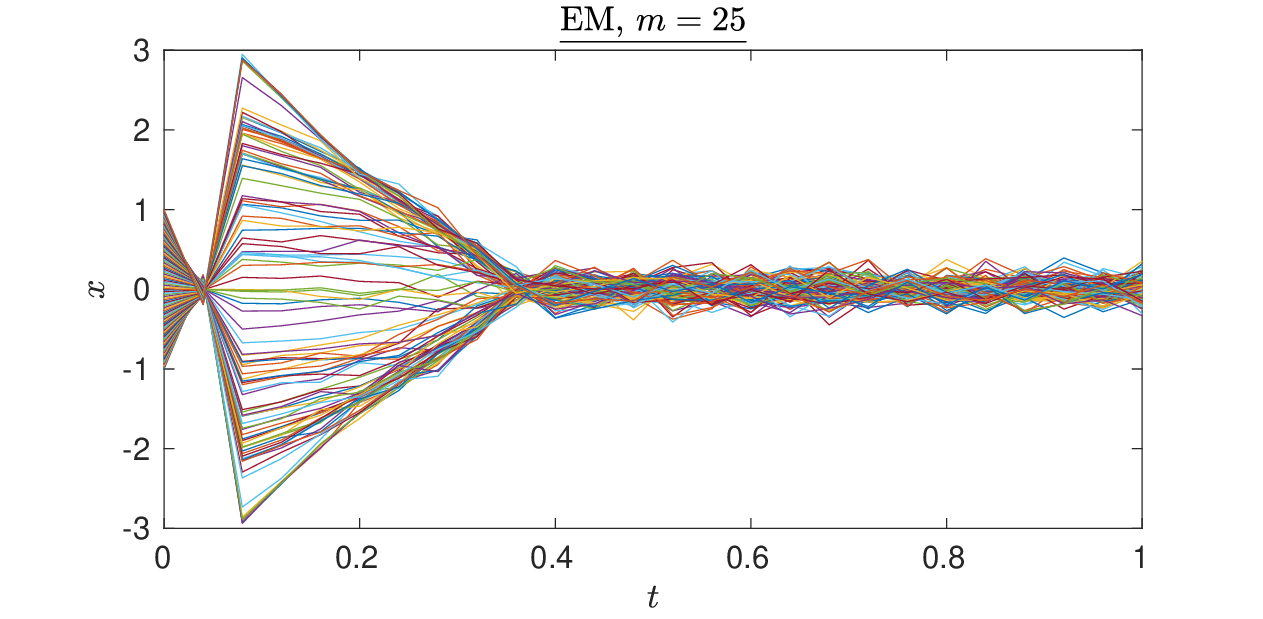}\\[1ex]
  \includegraphics[scale=0.5]{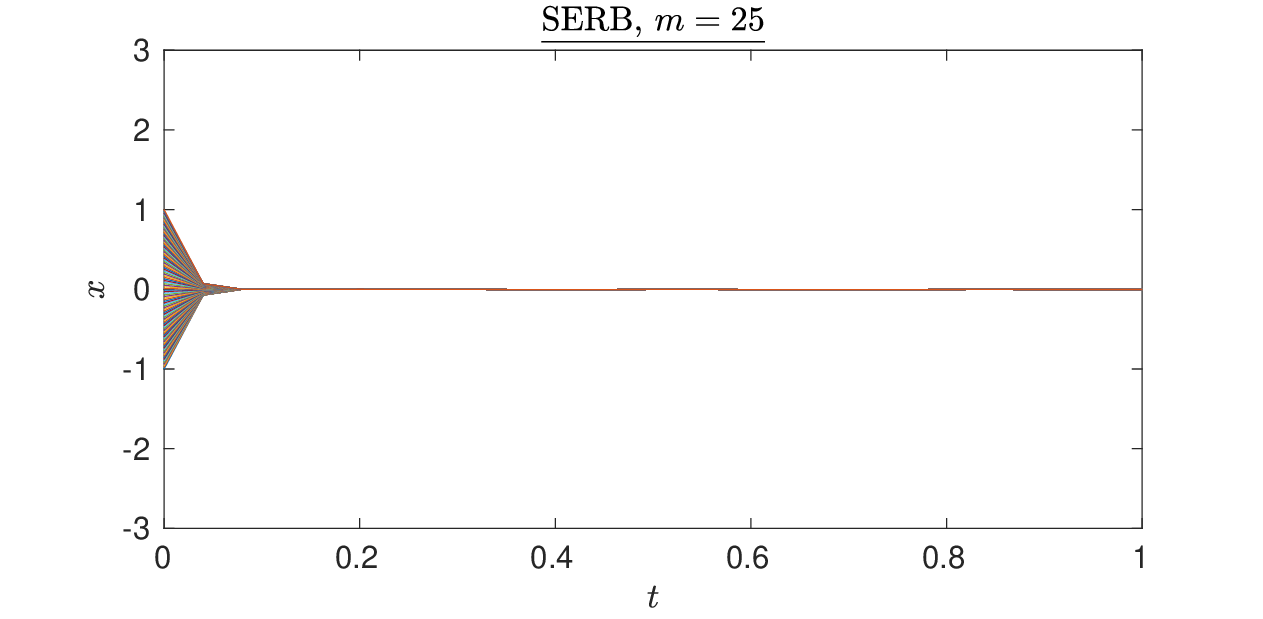}
  \caption{Mean value of the computed trajectories when there is no control in the dynamics and $\epsilon=5\cdot 10^{-2}$.
  The number of time steps is $m=25$.
  Top plot: Euler--Maruyama method (EM). Bottom plot: stochastic exponential Rosenbrock--Euler method (SERB).}
  \label{fig:test1_stoch_stiff_a}
\end{figure}
\begin{figure}[!htb]
  \centering
  \includegraphics[scale=0.5]{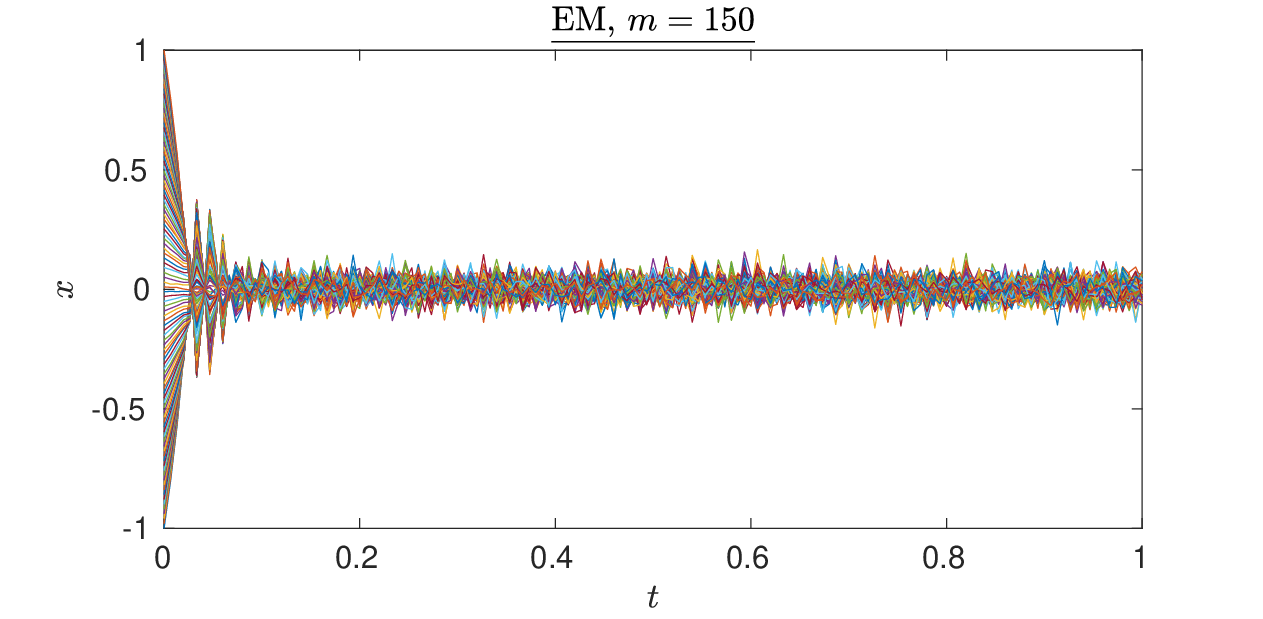}\\[1ex]
  \includegraphics[scale=0.5]{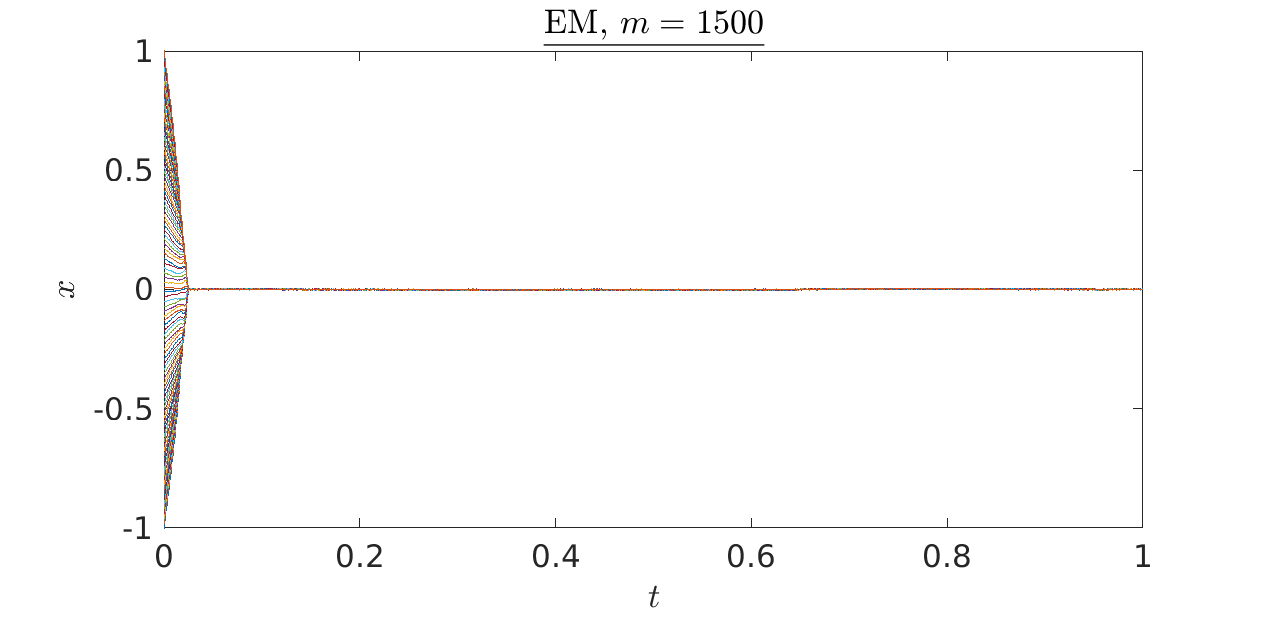}
  \caption{Mean value of the computed trajectories when there is no control in the dynamics and $\epsilon=5\cdot 10^{-2}$.
  Top: Euler--Maruyama method (EM) with $m=150$ time steps. Bottom: Euler--Maruyama method (EM) with $m=1500$ time steps.}
  \label{fig:test1_stoch_stiff_b}
\end{figure}

Remark that the requirement of a large number of time steps for the Euler--Maruyama method is due to the stiffness of the interaction kernel. Indeed, if we perform the experiment setting $\epsilon=1$, we obtain comparable
results (with respect to the stochastic Rosenbrock--Euler method) using
$m=50$ time steps for both schemes (see Figure~\ref{fig:test1_stoch_nonstiff})
\begin{figure}[!htb]
  \centering
  \includegraphics[scale=0.5]{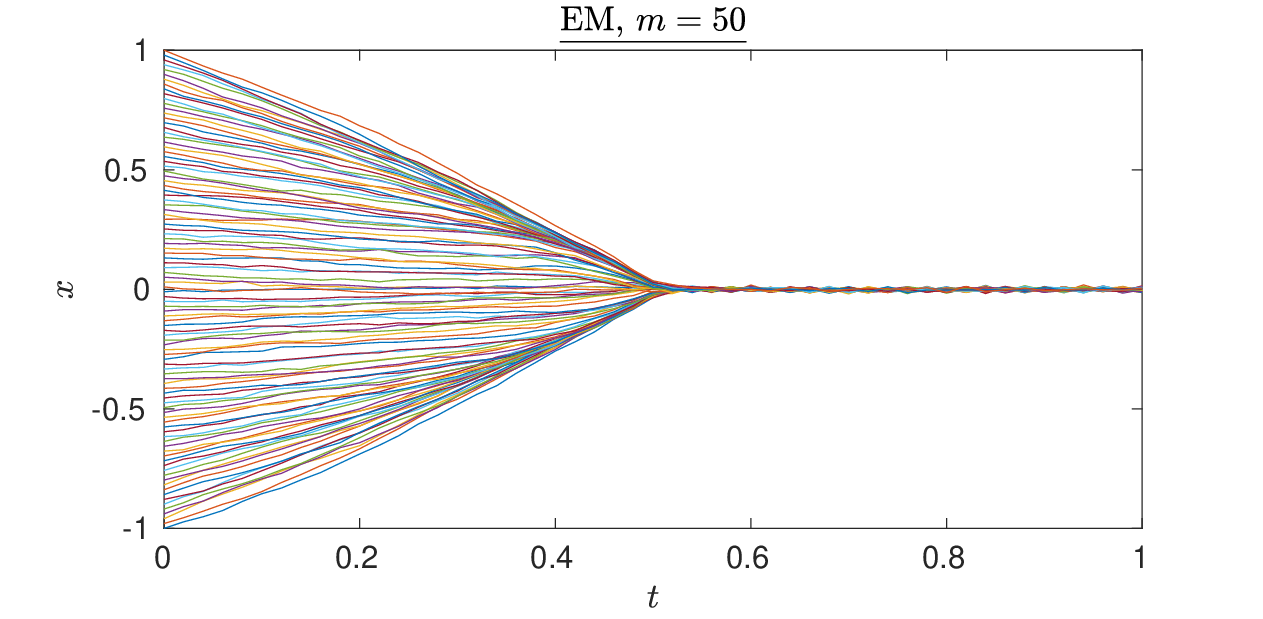}\\[1ex]
  \includegraphics[scale=0.5]{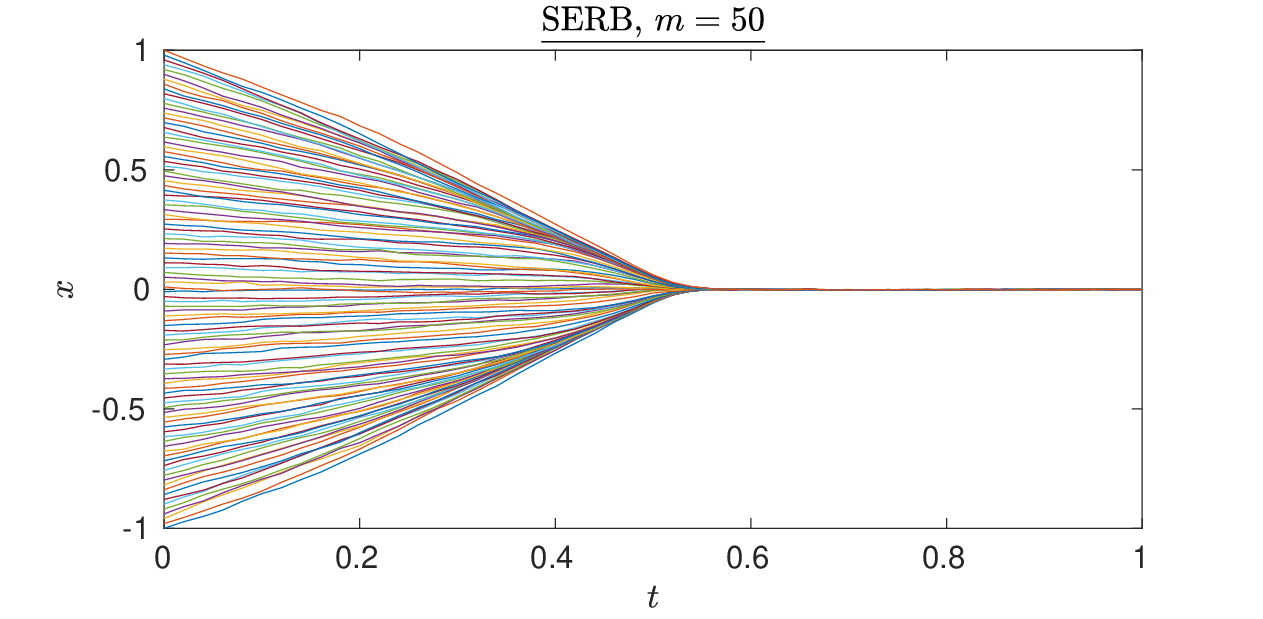}
  \caption{Mean value of the computed trajectories when there is no control in the dynamics and $\epsilon=1$. The number of time steps is $m=50$.
  Top: Euler--Maruyama method (EM). Bottom: stochastic exponential Rosenbrock--Euler method (SERB).}
  \label{fig:test1_stoch_nonstiff}
\end{figure}

We finally notice that we can draw similar conclusions in a deterministic scenario, i.e., when $\sigma=0$. In fact, the randomness in the dynamics does not mitigate the need of a strong time step size restriction for the standard explicit integrators. We demonstrate this by performing a numerical experiment
setting $\epsilon=5\cdot 10^{-2}$. For the time marching, we consider the classical first-order explicit Euler method and the second-order Runge--Kutta scheme
\begin{equation*}
\begin{aligned}
    \bb{X}^{n2} &= \bb{x}^n + \tau\bb{F}(t_n,\bb{x}^n),\\
    \bb{x}^{n+1} &= \bb{x}^n + \frac{\tau}{2}\left(\bb{F}(t_n,\bb{x}^n)+\bb{F}(t_{n+1},\bb{X}^{n2})\right),
\end{aligned}
\end{equation*}
also known as Heun's method or explicit trapezoidal rule. As exponential
integrator, we employ the standard explicit Rosenbrock--Euler method~\eqref{eq:expRB}, which is second-order accurate.
We report in Figures~\ref{fig:test1_deter_stiff_a} and~\ref{fig:test1_deter_stiff_b} the results of the experiments. As expected,
both explicit Euler and Heun's method are not able to correctly capture the dynamics of the system, unless a consistently high number of time steps is employed. This is in contrast with what happens for the exponential Rosenbrock--Euler method, which already with $m=25$ time steps reaches the expected state in a stable way.
\begin{figure}[!htb]
  \centering
  \includegraphics[scale=0.5]{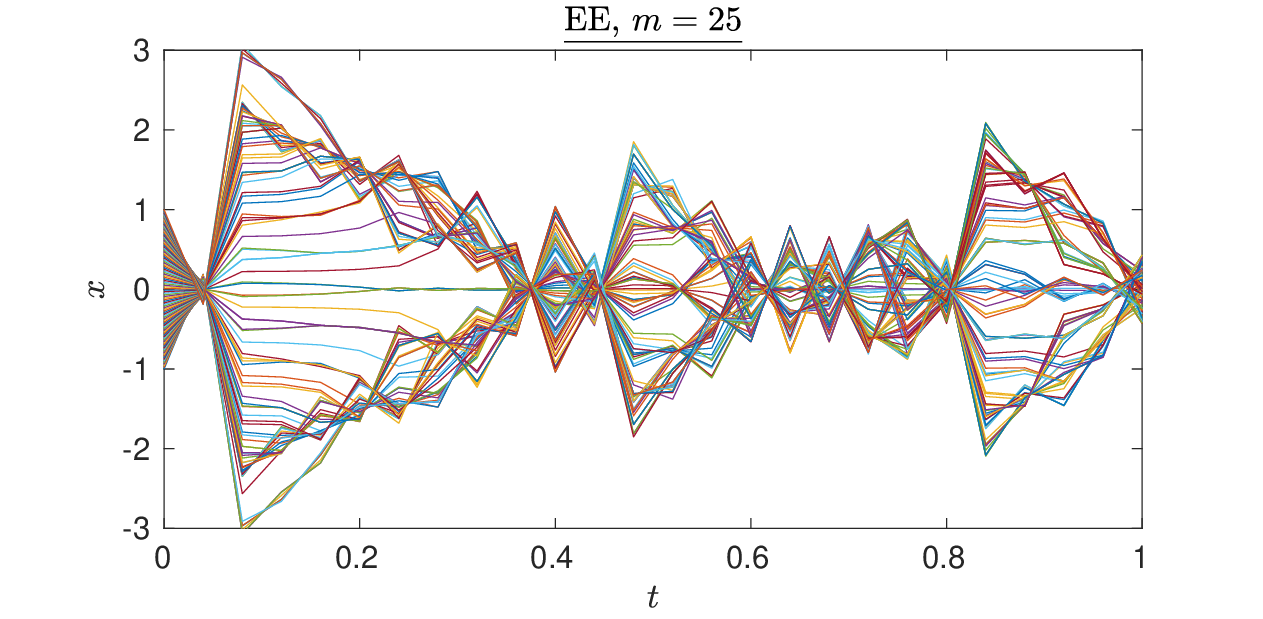}\\[1ex]
  \includegraphics[scale=0.5]{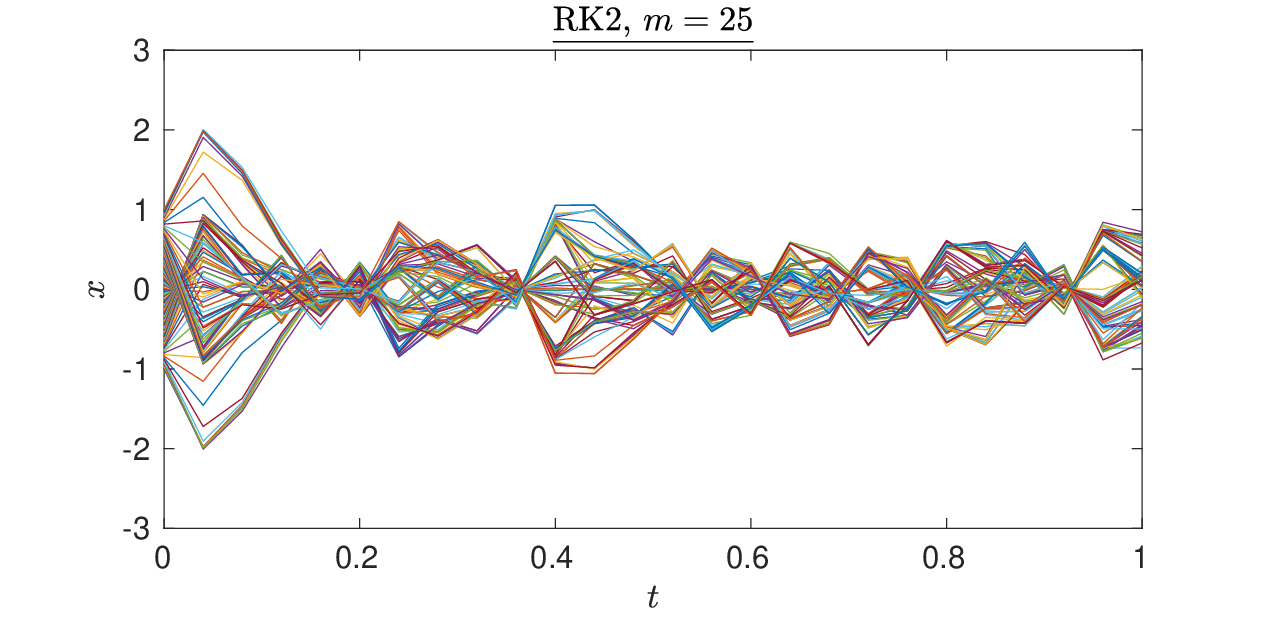}\\[1ex]
  \includegraphics[scale=0.5]{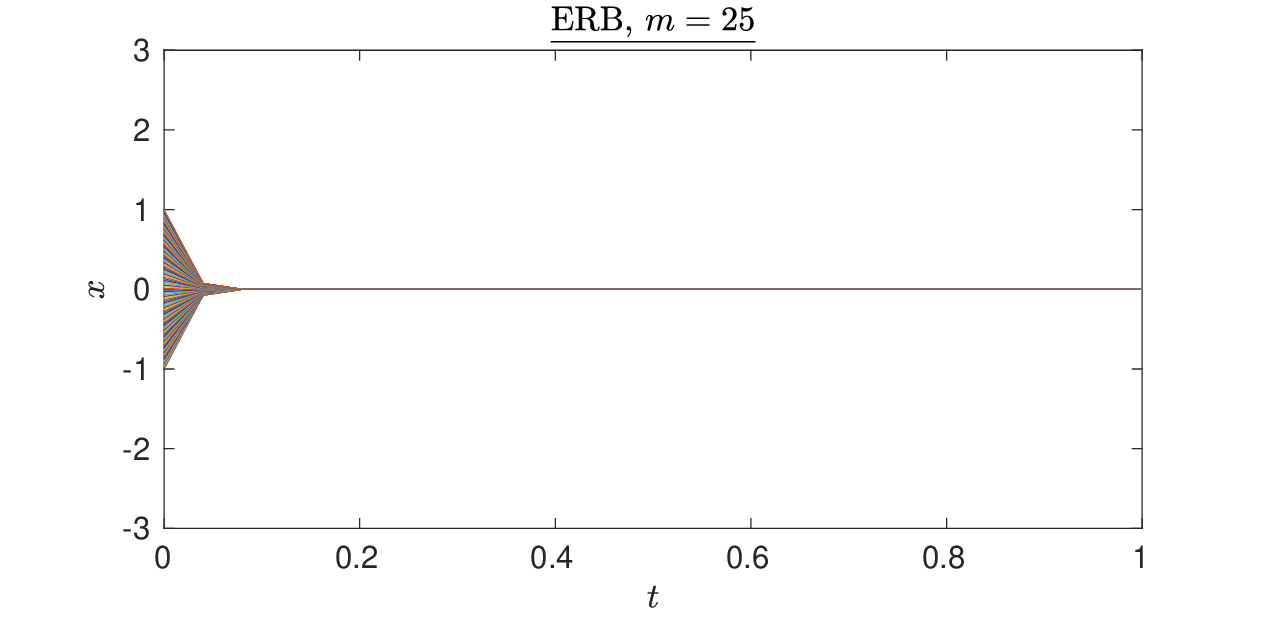}
  \caption{Computed trajectories when there is no control in the dynamics and $\epsilon=5\cdot 10^{-2}$.
  The number of time steps is $m=25$.
  Top plot: explicit Euler method (EE). Center plot: Heun's method (RK2). Bottom plot: exponential Rosenbrock--Euler method (ERB).}
  \label{fig:test1_deter_stiff_a}
\end{figure}
\begin{figure}[!htb]
  \centering
  \includegraphics[scale=0.5]{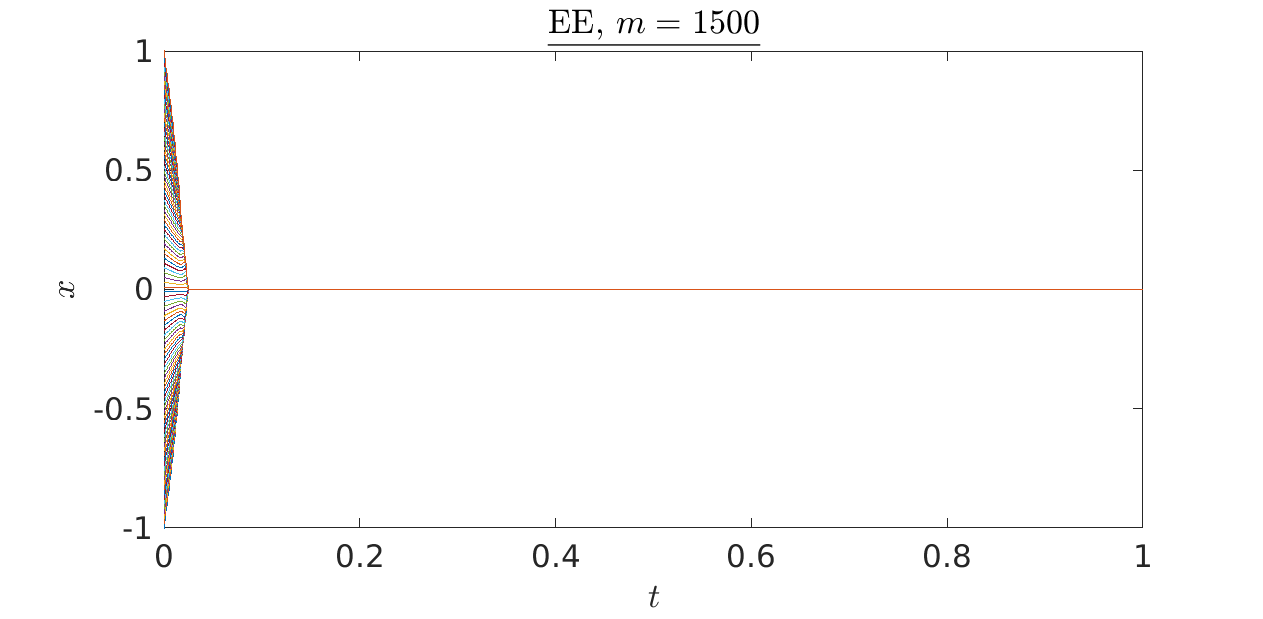}\\[1ex]
  \includegraphics[scale=0.5]{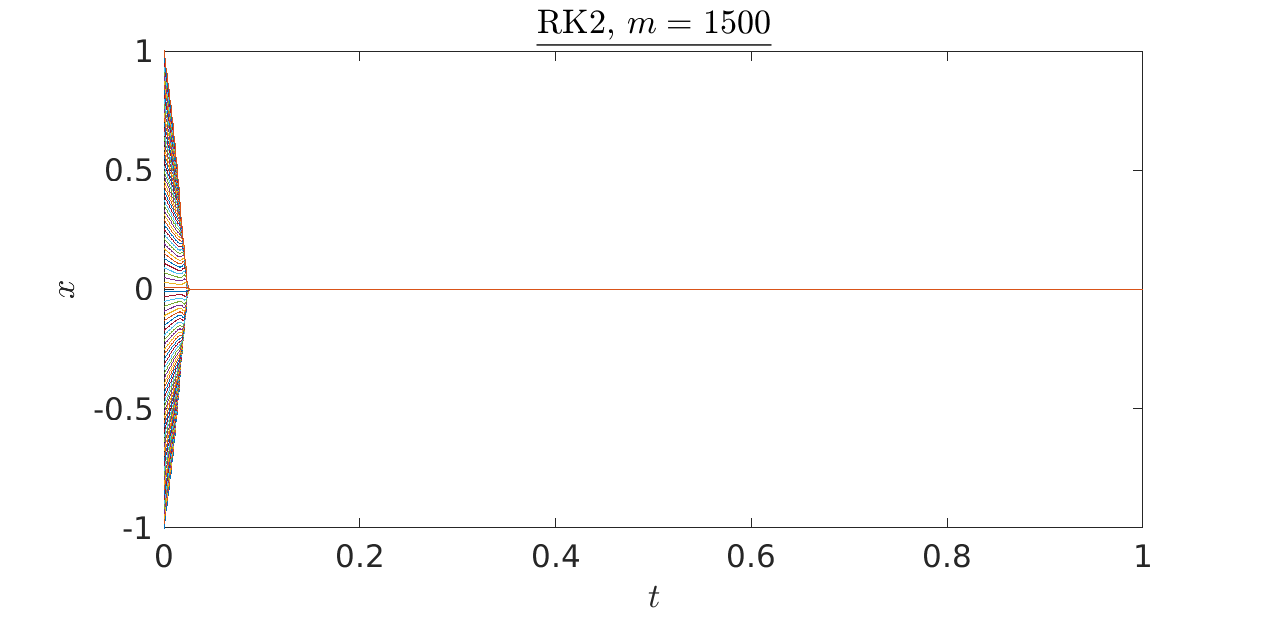}
  \caption{Computed trajectories when there is no control in the dynamics and $\epsilon=5\cdot 10^{-2}$.
  The number of time steps is $m=1500$.
  Top plot: explicit Euler method (EE). Bottom plot: Heun's method (RK2).}
  \label{fig:test1_deter_stiff_b}
\end{figure}

\subsection{Tests with turnpike control}\label{sec:numexptc}
We now perform some numerical examples that show the validity and the effectiveness of the
turnpike property in our setting.
To this aim, we start by considering the interaction kernel~\eqref{eq:symmker}, setting $\epsilon=5\cdot 10^{-2}$. We choose as decay parameter for the Lyapunov functional $\beta=12$ and impose the threshold value $\delta=2\cdot 10^{-4}$ (see Section~\ref{sec:estimate}). Then, according to
formula~\eqref{eq:tptime}, once we set the target point we are able to find an estimate for the turnpike time $\overline{t}$. Remark that, in terms of actual simulation, we split the time interval $[0,T]$ into $I_1=[0,\overline{t}]$ and $I_2=[\overline{t},T]$. In $I_1$ we perform the \textit{cheap} time integration of
equation~\eqref{eq:dyn_mean} by means of the
exponential Rosenbrock--Euler method (see formula~\eqref{eq:expint_mean}) or the explicit Euler method, depending on the time marching framework we are considering.
We stress that when cheap integrating with
the exponential integrator, we could perform a \textit{single} step to reach the time $\overline{t}$ (in contrast to, e.g., the explicit Euler method). This is indeed the case since,
at this stage, the equation is linear, and therefore the exponential integrator
solves it \textit{exactly} (as already discussed in the proof of Lemma~\ref{lem:cheap}). Nevertheless, just for visualization purposes, in the experiment we computed the position of the agents at each time discretization point. 
Finally, in the time interval $I_2$ the static control can be employed, which in our
case is zero. Therefore, in $I_2$ we are essentially evolving the dynamics as if it
was uncontrolled (as done in the experiments in the previous subsection). The number of
time steps is always set to $m=50$.

For the first experiment we choose $\overline{x}=0.7$ as target state. The turnpike time, according to formula~\eqref{eq:tptime}, is then selected as $\overline{t}=0.54$.
The results are reported in Figure~\ref{fig:test2_cheap_stiff} (top and center plot).
As expected, since there is no source of stiffness in the cheap integration part,
the standard explicit method and the exponential integrator qualitatively behave similarly. However, when the turnpike time is surpassed, the system is left uncontrolled
and the instability coming from the stiff kernel arises in the Euler--Maruyama method (as already observed in the previous subsection). This, in turn, does not happen when
working in the exponential integrators framework, which allow to reach the target point in a stable way.
We then conclude that the latter is clearly the preferred approach in this context.

We now test the validity of the exponential integration procedure also when the
target point is outside the initial interval $[-1,1]$. In particular, we set $\overline{x}=-1.7$ and select accordingly the turnpike time as $\overline{t}=0.6$.
The result, presented in Figure~\ref{fig:test2_cheap_stiff} (bottom plot) is in perfect
agreement with what expected.
\begin{figure}[!htb]
  \centering
  \includegraphics[scale=0.5]{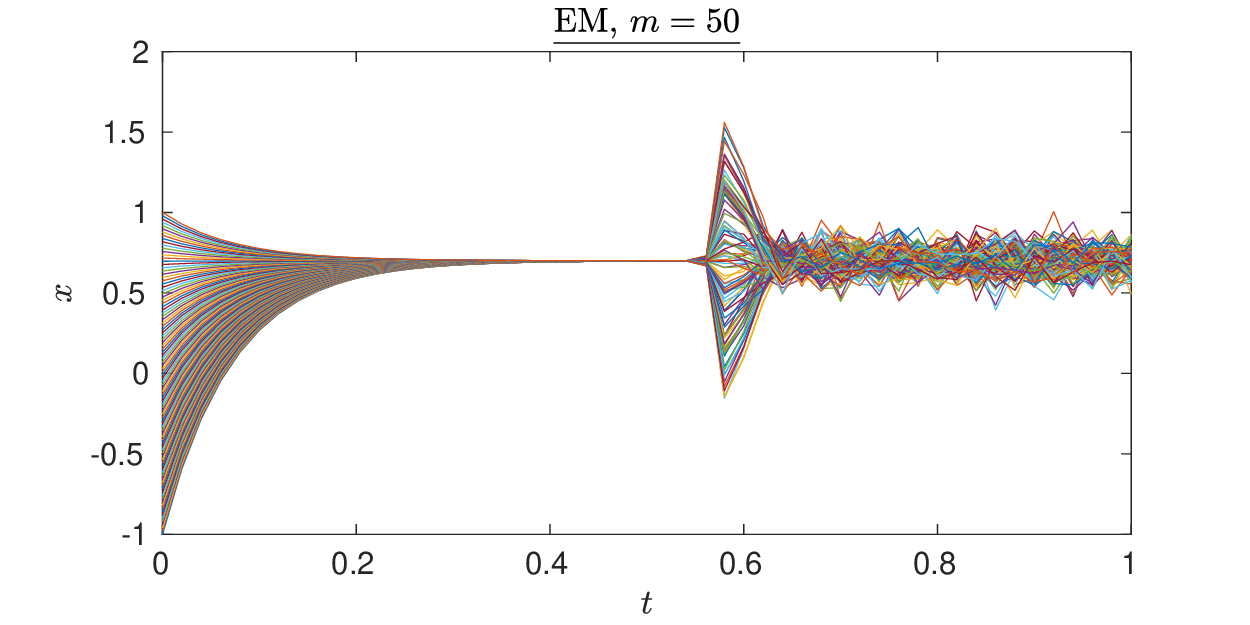}\\[1ex]
  \includegraphics[scale=0.5]{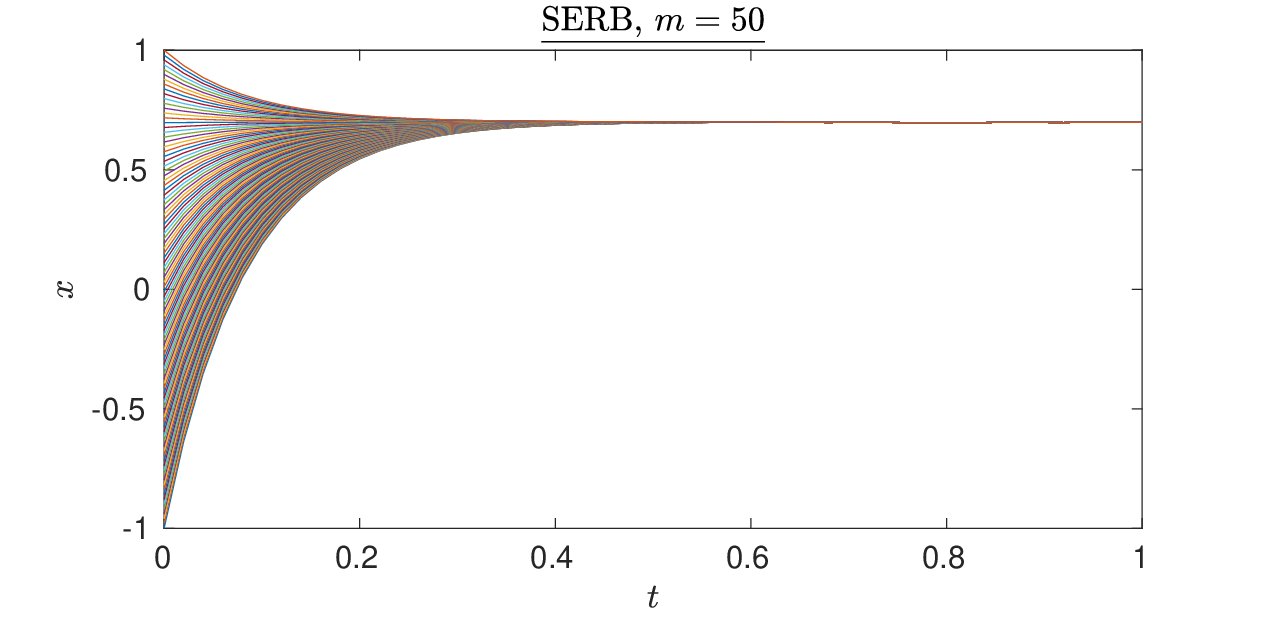}\\[1ex]
  \includegraphics[scale=0.5]{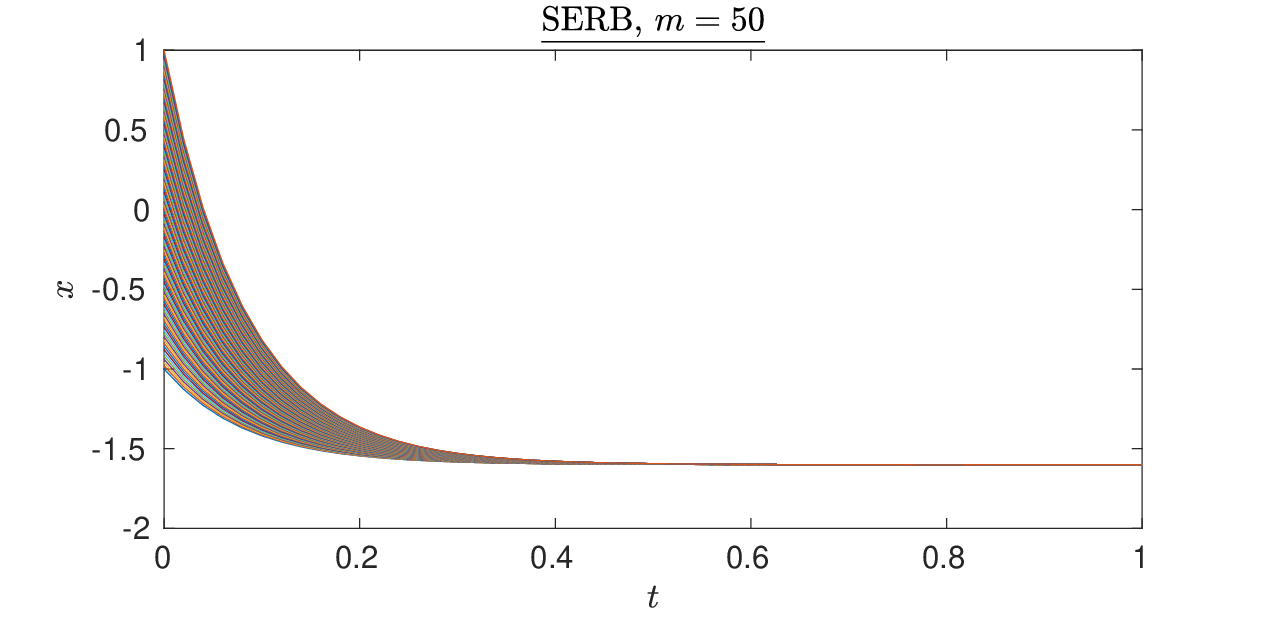}
  \caption{Tests with cheap control, $\epsilon=5\cdot 10^{-2}$.
  The plot is the expected value of the results computed with $n_{bm}=20$
  Brownian motion paths. The number of time steps is $m=50$.
  Top: Euler--Maruyama method (EM), target $\bar{x}=0.7$. Center: stochastic exponential Rosenbrock--Euler method (SERB), target $\bar{x}=0.7$. Bottom: stochastic exponential Rosenbrock--Euler method (SERB), target $\bar{x}=-1.6$.}
  \label{fig:test2_cheap_stiff}
\end{figure}

Finally, we employ the proposed approach in the context of a non-symmetric kernel. To this aim, we de-symmetrize~\eqref{eq:symmker} by considering instead
 \begin{equation}\label{eq:nonsymmker}
    p(x_k(t),x_\ell(t))=\frac{1}{\epsilon_\ell(\alpha^2+\lvert x_k(t)-x_\ell(t) \rvert^2)}
  \end{equation}
  with
  \begin{equation*}
  \epsilon_\ell = \epsilon_\mathrm{min} + \frac{(\epsilon_\mathrm{max}-\epsilon_\mathrm{min})(\ell-1)}{N-1}.
  \end{equation*}
  In fact, the agents are linearly weighted with magnitudes controlled by $\epsilon_\mathrm{min}$ and $\epsilon_\mathrm{max}$. The parameters are set to $\alpha=0.1$, $\epsilon_\mathrm{min}=10^{-2}$, and $\epsilon_\mathrm{max}=10^{-1}$. The result of the experiment with target point $\overline{x}=2.3$ and $\overline{t}=0.62$ (obtained once again setting $\beta=12$ and $\delta=2\cdot10^{-4}$) is summarized in Figure~\ref{fig:test2_cheap_stiff_nonsymm}.
  The outcome is in line with what expected, i.e., the employment of a non-symmetric kernel does not generate issues for the turnpike control and, in particular, does not nullify the convergence of the system to the desired state.
  \begin{figure}[!htb]
  \centering
  \includegraphics[scale=0.5]{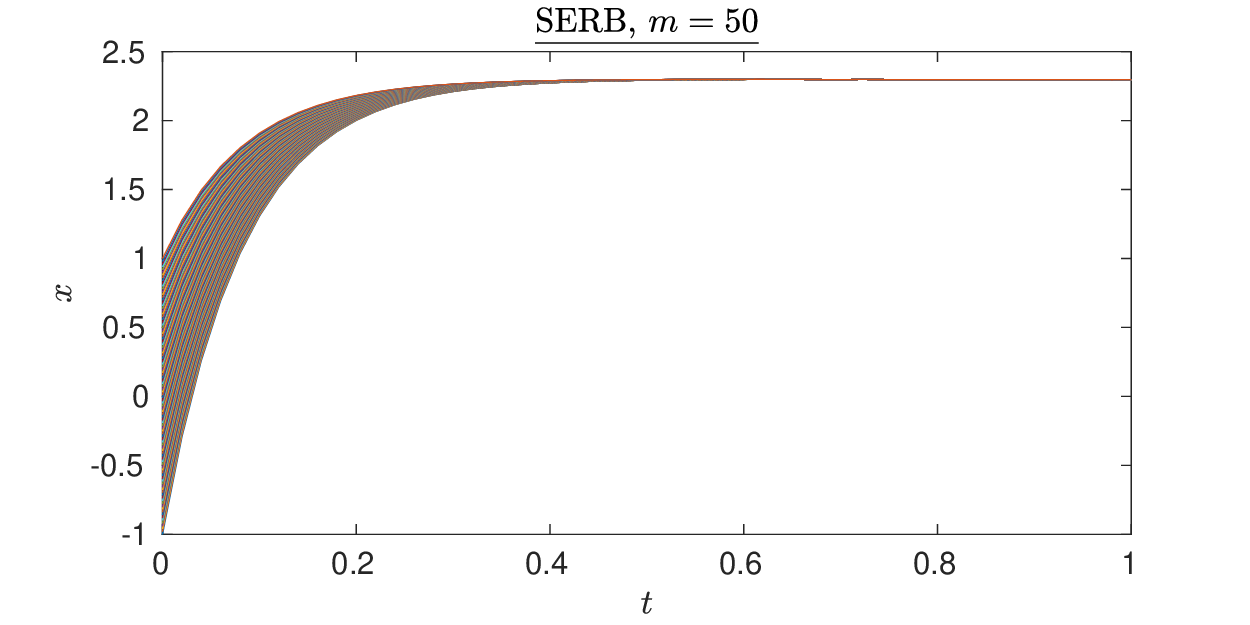}
  \caption{Test with cheap control and non-symmetric interaction kernel~\eqref{eq:nonsymmker}.
  The plot is the expected value of the results computed with $n_{bm}=20$
  Brownian motion paths. The number of time steps is $m=50$.
  The time marching is performed with the stochastic exponential Rosenbrock--Euler method (SERB) with target $\bar{x}=2.3$.}
  \label{fig:test2_cheap_stiff_nonsymm}
\end{figure}

\section{Conclusions}\label{sec:conc}
In this work, we established the turnpike property for discrete-time stochastic optimal control problems governing interacting agents. By leveraging strict dissipativity and cheap control conditions, we showed that the turnpike phenomenon persists despite the presence of noise. To address numerical challenges arising from stiffness, we employed exponential integrators, demonstrating their stability advantages over standard explicit schemes. Our numerical experiments confirmed that the proposed approach effectively maintains the desired asymptotic behavior in several scenarios. For future work, an interesting direction would be to extend the analysis beyond the mean value and investigate the variance of the stochastic dynamics. This would provide a deeper understanding of the fluctuations around the mean and their impact on the control strategy. Additionally, exploring more complex interaction kernels, including unbounded kernels, could lead to further insights into the turnpike behavior of large-scale agent systems. 

\section*{Acknowledgments}
The authors are deeply grateful to Prof.~Michael Herty for its continuous interest in this work and for the numerous constructive discussions.
The authors are members of the Gruppo Nazionale Calcolo Scientifico-Istituto Nazionale
di Alta Matematica (GNCS-INdAM). F. Cassini holds a post-doc fellowship funded by INdAM.
C. Segala thanks the Swiss National Science Foundation (SNSF) for the financial support through the grant number 215528, Large-scale kernel methods in financial economics.

\bibliographystyle{acm}
\bibliography{referencesnmr.bib}

\begin{thebibliography}{10}

\bibitem{AMH11}
{\sc Al-Mohy, A.~H., and Higham, N.~J.}
\newblock Computing the action of the matrix exponential, with an application
  to exponential integrators.
\newblock {\em SIAM J. Sci. Comput. 33}, 2 (2011), 488--511.

\bibitem{MR4469721}
{\sc Albi, G., Bicego, S., and Kalise, D.}
\newblock Gradient-augmented supervised learning of optimal feedback laws using
  state-dependent {R}iccati equations.
\newblock {\em IEEE Control Syst. Lett. 6\/} (2022), 836--841.

\bibitem{ACCC24}
{\sc Albi, G., Caliari, M., Calzola, E., and Cassini, F.}
\newblock Exponential integrators for a mean-field selective optimal control
  problem.
\newblock {\em J. Approx. Softw. 1}, 2 (2024).

\bibitem{MR4399019}
{\sc Albi, G., Herty, M., Kalise, D., and Segala, C.}
\newblock Moment-driven predictive control of mean-field collective dynamics.
\newblock {\em SIAM J. Control Optim. 60}, 2 (2022), 814--841.

\bibitem{albi2024robust}
{\sc Albi, G., Herty, M., and Segala, C.}
\newblock Robust feedback stabilization of interacting multi-agent systems
  under uncertainty.
\newblock {\em Applied Mathematics \& Optimization 89}, 1 (2024), 16.

\bibitem{MR3894072}
{\sc Albi, G., and Pareschi, L.}
\newblock Selective model-predictive control for flocking systems.
\newblock {\em Commun. Appl. Ind. Math. 9}, 2 (2018), 4--21.

\bibitem{attraction}
{\sc Balagu{\'e}, D., Carrillo, J., Laurent, T., and Raoul, G.}
\newblock Nonlocal interactions by repulsive--attractive potentials: Radial
  ins/stability.
\newblock {\em Phys. D: Nonlinear Phenom. 260\/} (2013), 5--25.

\bibitem{MR3642940}
{\sc Bellomo, N., Degond, P., and Tadmor, E.}, Eds.
\newblock {\em Active particles. {V}ol. 1. {A}dvances in theory, models, and
  applications}.
\newblock Modeling and Simulation in Science, Engineering and Technology.
  Birkh\"{a}user/Springer, Cham, 2017.

\bibitem{MR3969953}
{\sc Bellomo, N., Degond, P., and Tadmor, E.}, Eds.
\newblock {\em Active particles. {V}ol. 2. {A}dvances in theory, models, and
  applications}.
\newblock Modeling and Simulation in Science, Engineering and Technology.
  Birkh\"{a}user/Springer, Cham, 2019.

\bibitem{CC24ter}
{\sc Caliari, M., and Cassini, F.}
\newblock Efficient simulation of complex {G}inzburg--{L}andau equations using
  high-order exponential-type methods.
\newblock {\em Appl. Num. Math. 206\/} (2024), 340--357.

\bibitem{CC24bis}
{\sc Caliari, M., and Cassini, F.}
\newblock A second order directional split exponential integrator for systems
  of advection--diffusion--reaction equations.
\newblock {\em J. Comput. Phys. 498\/} (2024), 112640.

\bibitem{CCEO24}
{\sc Caliari, M., Cassini, F., Einkemmer, L., and Ostermann, A.}
\newblock Accelerating exponential integrators to efficiently solve semilinear
  advection-diffusion-reaction equations.
\newblock {\em SIAM J. Sci. Comput. 46}, 2 (2024), A906--A928.

\bibitem{CCZ23}
{\sc Caliari, M., Cassini, F., and Zivcovich, F.}
\newblock {BAMPHI}: {M}atrix-free and transpose-free action of linear
  combinations of $\varphi$-functions from exponential integrators.
\newblock {\em J. Comput. Appl. Math. 423\/} (2023), 114973.

\bibitem{CEMO21}
{\sc Caliari, M., Einkemmer, L., Moriggl, A., and Ostermann, A.}
\newblock An accurate and time-parallel rational exponential integrator for
  hyperbolic and oscillatory {PDE}s.
\newblock {\em J. Comput. Phys. 437\/} (2021), 110289.

\bibitem{sparse-Cucker}
{\sc Caponigro, M., Fornasier, M., Piccoli, B., and Tr{\'e}lat, E.}
\newblock Sparse stabilization and optimal control of the cucker-smale model.
\newblock {\em Math. Control Relat. Fields 3\/} (2013), 447--466.

\bibitem{application-[14]}
{\sc Cordier, S., Pareschi, L., and Toscani, G.}
\newblock On a kinetic model for a simple market economy.
\newblock {\em J. Stat. Phys. 120}, 1-2 (jul 2005), 253--277.

\bibitem{GRT18}
{\sc Gaudreault, S., Rainwater, G., and Tokman, M.}
\newblock {KIOPS: A} fast adaptive {K}rylov subspace solver for exponential
  integrators.
\newblock {\em J. Comput. Phys. 372\/} (2018), 236--255.

\bibitem{MR4402854}
{\sc Gr\"{u}ne, L.}
\newblock Dissipativity and optimal control: examining the turnpike phenomenon.
\newblock {\em IEEE Control Syst. 42}, 2 (2022), 74--87.

\bibitem{gruene2018turnpike}
{\sc Gru\"ne, L., and Guglielmi, R.}
\newblock Turnpike properties and strict dissipativity for discrete time linear
  quadratic optimal control problems.
\newblock {\em SIAM J. Control Optim. 56}, 2 (2018), 1282--1302.

\bibitem{GugatHertyLiuSegala2024}
{\sc Gugat, M., Herty, M., Liu, J., and Segala, C.}
\newblock The turnpike property for high-dimensional interacting agent systems
  in discrete time.
\newblock {\em Optim. Control Appl. Meth. 45}, 6 (2024), 2557--2571.

\bibitem{main}
{\sc Gugat, M., Herty, M., and Segala, C.}
\newblock The turnpike property for mean-field optimal control problems.
\newblock {\em Eur. J. Appl. Math.\/} (2023), 1--15.

\bibitem{MR3431287}
{\sc Herty, M., Pareschi, L., and Steffensen, S.}
\newblock Mean-field control and {R}iccati equations.
\newblock {\em Netw. Heterog. Media 10}, 3 (2015), 699--715.

\bibitem{application-[29]}
{\sc Herty, M., and Ringhofer, C.}
\newblock Averaged kinetic models for flows on unstructured networks.
\newblock {\em Kinet. Relat. Models 4\/} (12 2011).

\bibitem{H01}
{\sc Higham, D.~J.}
\newblock An {A}lgorithmic {I}ntroduction to {N}umerical {S}imulation of
  {S}tochastic {D}ifferential {E}quations.
\newblock {\em SIAM rev. 43}, 3 (2001), 525--546.

\bibitem{HO05bis}
{\sc Hochbruck, M., and Ostermann, A.}
\newblock Explicit {E}xponential {R}unge--{K}utta {M}ethods for {S}emilinear
  {P}arabolic {P}roblems.
\newblock {\em SIAM J. Numer. Anal. 43}, 3 (2005), 1069--1090.

\bibitem{HO10}
{\sc Hochbruck, M., and Ostermann, A.}
\newblock Exponential integrators.
\newblock {\em Acta Numer. 19\/} (2010), 209--286.

\bibitem{HOS09}
{\sc Hochbruck, M., Ostermann, A., and Schweitzer, J.}
\newblock Exponential {R}osenbrock-type {M}ethods.
\newblock {\em SIAM J. Numer. Anal. 47}, 1 (2009), 786--803.

\bibitem{LT13}
{\sc Lord, G.~J., and Tambue, A.}
\newblock Stochastic exponential integrators for the finite element
  discretization of {SPDE}s for multiplicative and additive noise.
\newblock {\em IMA J. Numer. Anal. 33\/} (2013), 515--543.

\bibitem{LT19}
{\sc Lord, G.~J., and Tambue, A.}
\newblock Stochastic exponential integrators for a finite element
  discretisation of {SPDE}s with additive noise.
\newblock {\em Appl. Num. Math. 136\/} (2019), 163--182.

\bibitem{LPR19}
{\sc Luan, V.~T., Pudykiewicz, J.~A., and Reynolds, D.~R.}
\newblock Further development of efficient and accurate time integration
  schemes for meteorological models.
\newblock {\em J. Comput. Phys. 376\/} (2019), 817--837.

\bibitem{ma2017consensus}
{\sc Ma, L., Wang, Z., Han, Q.-L., and Liu, Y.}
\newblock Consensus control of stochastic multi-agent systems: a survey.
\newblock {\em Science China Information Sciences 60\/} (2017), 1--15.

\bibitem{MT18}
{\sc Mukam, J.~D., and Tambue, A.}
\newblock Strong {C}onvergence {A}nalysis of the {S}tochastic {E}xponential
  {R}osenbrock {S}cheme for the {F}inite {E}lement {D}iscretization of
  {S}emilinear {SPDE}s {D}riven by {M}ultiplicative and {A}dditive {N}oise.
\newblock {\em J. Sci. Comput. 74\/} (2018), 937--978.

\bibitem{ou2021simulation}
{\sc Ou, R., Baumann, M.~H., Gr{\"u}ne, L., and Faulwasser, T.}
\newblock A simulation study on turnpikes in stochastic lq optimal control.
\newblock {\em IFAC-PapersOnLine 54}, 3 (2021), 516--521.

\bibitem{10759735}
{\sc Schießl, J., Baumann, M.~H., Faulwasser, T., and Grüne, L.}
\newblock On the relationship between stochastic turnpike and dissipativity
  notions.
\newblock {\em IEEE Transactions on Automatic Control\/} (2024), 1--13.

\bibitem{SW09}
{\sc Skaflestad, B., and Wright, W.~M.}
\newblock The scaling and modified squaring method for matrix functions related
  to the exponential.
\newblock {\em Appl. Numer. Math. 59}, 3--4 (2009), 783--799.

\bibitem{willems1972dissipative}
{\sc Willems, J.~C.}
\newblock Dissipative dynamical systems part {I}: {G}eneral theory.
\newblock {\em Arch. Ratio. Mech. Anal. 45}, 5 (1972), 321--351.

\bibitem{turnpike-2}
{\sc Zaslavski, A.~J.}
\newblock {\em Turnpike Properties in the Calculus of Variations and Optimal
  Control}, 1st~ed., vol.~80 of {\em Nonconvex Optimization and Its
  Applications}.
\newblock Springer US, New York, NY, 2006.

\bibitem{MR3973342}
{\sc Zaslavski, A.~J.}
\newblock Necessary and sufficient turnpike conditions.
\newblock {\em Pure Appl. Funct. Anal. 4}, 2 (2019), 463--476.

\end{thebibliography}
	
\end{document}